\providecommand{\U}[1]{\protect\rule{.1in}{.1in}}
\newtheorem {theorem}{Theorem}[section]
\newtheorem {corollary}{Corollary}[section]
\newtheorem{lemma}{Lemma}[section]
\newtheorem{remark}{Remark}[section]
\newenvironment{proof}[1][Proof]{\textbf{#1.} }{\
\rule{0.5em}{0.5em}}
\def\P{{\mathbb P}}
\begin{document}

\begin{center}
{\LARGE Exact Moderate and Large Deviations for Linear Random Fields}

\bigskip
  
\centerline{\today}

\bigskip Hailin Sang$^{a}$
and Yimin Xiao$^{b}$

\bigskip$^{a}$ Department of Mathematics, The University of Mississippi,
University, MS 38677, USA. E-mail: sang@olemiss.edu

\bigskip$^{b}$ Department of Statistics and Probability, Michigan State University, East
Lansing, MI 48824, USA. E-mail:  xiao@stt.msu.edu
\end{center}

\bigskip \textbf{Abbreviated Title: }{\Large Deviations for linear random fields}

\begin{center}
\bigskip
\textbf{Abstract}
\end{center}
By extending the methods in Peligrad et al. (2014a, b), we establish exact moderate and large deviation
asymptotics for linear random fields with independent innovations. These results are useful for studying
nonparametric regression with random field errors and strong limit theorems. \\

\noindent Key words: Large deviation, moderate deviation, linear random fields,
nonparametric regression,
Davis-Gut law of the iterated logarithm.\\

\noindent  {\textit{MSC 2010 subject classification}: 60F10, 60G60, 62E20}


\section{Introduction}

Random fields play a central role in modeling and analyzing spatially correlated
data and have a wide range of applications.
As a consequence, there has been increasing interest in studying
them in probability and statistics. 

Consider a linear random field $X = \{X_{j,k}, \, (j,k)\in \mathbb{Z}^2\}$
defined on a probability space $(\Omega, {\mathcal F}, \mathbb P)$ by
\begin{equation}\label{rf}
X_{j,k}=\sum_{r, s\in \mathbb{Z}} a_{r,s}\xi_{j-r, k-s},
\end{equation}
where $\{a_{r,s}, (r, s) \in \mathbb{Z}^2\}$ is a square summable sequence
of constants and the innovations $\{\xi_{r,s}, (r, s) \in \mathbb{Z}^2\}$ and
$\xi_0$ are i.i.d. random variables with $\mathbb{E}\xi_0=0$
and $\mathbb{E}\xi^2_0=1$. Under these conditions, $X_{j,k}$ in (\ref{rf}) is well-defined 
because the series in the right-hand side of \eqref{rf} converges in the 
$L^2(\Omega, \mathbb P)$-sense and almost surely. See Lemma \ref{Lem:Conv} in the
Appendix. In the literature, there have been extensive studies
on limit theorems and estimation problems for linear random fields. For example,
Marinucci and Poghosyan (2001), and Paulauskas (2010) studied the asymptotics
for linear random fields, including law of large numbers, central limit theorems
and invariance principles,  by applying the Beveridge-Nelson decomposition method.
Banys et al. (2010) applied ergocic theory to study strong law of large numbers for 
linear random fields. Mallik and Woodroofe (2011) also established
the central limit theorem for linear random fields, and their method does not rely on
the Beveridge-Nelson decomposition. Under various settings, Tran (1990), Hallin et al.
(2004a, 2004b), El Machkoui (2007, 2014), El Machkouri and Stoica (2010), 
and Wang and Woodroofe (2014) studied local
linear regression, kernel density estimation and their asymptotics for linear
random fields. Gu and Tran (2009) developed fixed design regression study for
negatively associated random fields.

However, few authors have studied moderate and large deviations for linear random fields.
Davis and Hsing (1995), Mikosch and  Samorodnitsky (2000), Mikosch and Wintenberger (2013)
established large deviation results for certain stationary sequences, including linear processes 
with short-range dependence.  For linear processes which allow long range dependence, we 
mention that Djellout and Guillin (2001) proved moderate and large deviation results for linear 
processes with i.i.d. and bounded innovations; Djellout et al. (2006) studied moderate deviation
estimate for the empirical periodogram of a linear process; Wu and Zhao (2008) obtained
moderate deviations for stationary causal processes and their main theorem can be applied
to functionals of linear processes; and more recently, Peligrad et al. (2014a, b) established
exact moderate and large deviation asymptotics 
for linear processes with independent innovations. 

The main purpose of this paper is to extend the method in Peligrad et al. (2014a, b)
to establish exact moderate and large deviations for linear random fields as in
(\ref{rf}).  Let $\{\Gamma_n\}$ be a sequence of finite subsets of $\mathbb{Z}^2$ and 
denote the cardinality of $\Gamma_n$ by $|\Gamma_n|$. To be specific, we can take 
$\Gamma_n = [-n, n]^2 \cap \mathbb{Z}^2$, or $[1, n]^2 \cap \mathbb{Z}^2$ or more 
general rectangles. Define $S_n:=
S_{\Gamma_n}:=\sum_{(j,k)\in\Gamma_n} X_{j,k}$. By Lemma \ref{Lem:Conv} in the
Appendix, it can be written as 
\begin{equation}\label{Def:S}
S_n=\sum_{r,s\in\mathbb{Z}} b_{n,r,s}\xi_{-r, -s},
\end{equation}
where $b_{n,r,s}=\sum_{(j,k)\in\Gamma_n} a_{j+r, k+s}$. Let $\sigma_n^2
=\mathbb{E}S_n^2$.
The main results of this paper, Theorems 2.1 - 2.3, quantify the
roles of the moment and right-tail properties of $\xi_0$, the magnitude
of the coefficients $\{a_{r,s}\}$, as well as the speed of convergence
of $x_n\rightarrow \infty$, in the moderate and large deviation probabilities
for $\mathbb{P}\left(S_{n}\geq x_{n}\sigma_{n}\right)$. These results are
useful for studying asymptotic properties and statistical inference of linear
random fields. As examples, we show that our
moderate and large deviation results can be applied for studying nonparametric
regressions and for obtaining convergence rate in the law of the iterated
logarithm of linear random fields.



For simplicity of presentation, we focus on linear random fields
indexed by $\mathbb{Z}^2$. The theorems presented in this paper can be easily
extended to linear random field $X_{\bf j}=\sum_{{\bf r}\in \mathbb{Z}^N}
a_{\bf r}\xi_{\bf j-r}$ on $\mathbb{Z}^N$ with $N\ge 3$.

In this paper we shall use the following notations. For any constant $ p\ge 1$, we define 
$\|a\|_p:=\left[\sum_{r,s\in \mathbb{Z}}|a_{r,s}|^p\right]^{1/p}$. 
Then $\|a\|_2<\infty$ by the assumption and $\|a\|_p$ 
may be finite for some values of $p<2$. Similarly, for a random variable $\xi$, we use $\|\xi\|_p$
to denote its $L^p(\mathbb P)$-norm for $p \ge 1$. Let $\Phi(x)$
be the distribution function of the standard normal random variable.

For two sequences $\{a_n\}$ and $\{b_n\}$ of real numbers,  
$a_{n}\mathbb{\sim}b_{n}$ means $a_{n}/b_{n}\rightarrow1$ as $n\rightarrow \infty$;
$a_{n}\propto b_n$ means that $a_{n}/b_{n}\rightarrow C$ as $n\rightarrow \infty$
for some constant $C>0$; for positive sequences, the notation $a_{n}\ll b_{n}$
or $b_n\gg a_n$ replaces Vinogradov symbol $O$ and they mean that $a_{n}/b_{n}$
is bounded; $\lceil x\rceil$ means the smallest integer which is greater than or equal to $x$.

The rest of this paper has the following structure. Section 2 gives the main results 
on moderate and large deviations for $S_n$ in \eqref{Def:S}. In
Section 3 we apply the main results to nonparametric regression estimates and
prove a Davis-Gut law of iterated logarithm for linear random fields. The Appendix
provides the existing results which are useful for  proving the theorems
in Section 2. \\

\textbf{Acknowledgement}  The authors thank the referee and the Associate 
Editor for their careful reading of the manuscript and for their insightful comments, 
which have helped to improve the quality of this paper. The research of Yimin Xiao
is partially supported by NSF grants DMS-1612885 and DMS-1607089.


\section{Main results}

Even though the double sum $S_n$ in \eqref{Def:S} can be written (in infinitely many ways) 
as a single weighted sum of infinitely many i.i.d. random variables indexed by non-negative 
integers,\footnote{For example, 
$$ 
S_n=\sum_{i=0, |r|=i, |s|\le i \; \hbox{\tiny or }  |s|=i, |r|< i }^\infty b_{n,r,s}\xi_{-r, -s},
$$
where the number of terms for each index
$i$ is finite and the summation of the terms with the same $i$ can be taken in any order.}
the important role of the configuration of $\Gamma_n$ is usually hidden in 
such a representation and a partial order in $\mathbb{Z}^2$, which may not be natural for 
the problem under investigation, has to be imposed. These make it difficult to solve the problems 
for random fields satisfactorily by applying directly the results on a weighted sum of random 
variables indexed by one variable. Quite often new methods have to be developed. We 
refer to Chapter 1 of Klesov (2014) for further illustrations on connections as well as significant
differences in limit theorems of random fields and stochastic processes of one-variable.   

The objective of this section is to study moderate and large deviations for 
the partial sum $S_n$ in \eqref{Def:S} by extending the results for linear processes 
as in Peligrad et al. (2014a, b) to linear random fields. 

We will need some notations. Define 
$$D_{nt}:=\sum_{r,s\in\mathbb{Z}}|b_{n,r,s}|^t; \qquad 
U_{nt}:=(D_{n2})^{-t/2}D_{nt}.$$
Then $\sigma^2_n :=\mathbb{E}(S_n^2)=D_{n2}$. 
To avoid degeneracy, we assume tacitly $\sigma_n > 0$ for every $n$. 
Let $\rho_n^2:=\max_{r,s\in\mathbb{Z}}b_{n,r,s}^2/\sigma^2_n$. We will assume that $\rho_n^2 \to 0$ 
which means that the contribution of any single coefficient $b_{n,r,s}$ is negligible compared with 
$\sigma^2_n$. We remark that the magnitudes of $b_{n,r,s}^2$ and $D_{nt}$ depend on the coefficients 
$\{a_{r,s}, (r,s)\in \mathbb Z^2\}$ and the configuration of $\Gamma_n$. An interesting case is when 
$\{a_{r,s}\}$ is isotropic and $\Gamma_n = [1, n]^2\cap \mathbb Z^2$. See (\ref{la}) and Lemma \ref{Dorder}
below for details. More generally, the case when $\{a_{r,s}\}$ is anisotropic (i.e., $|a_{r, s}|$ depends $r$ 
and $s$ at different rates) and $\Gamma_n$ is a rectangle in $\mathbb Z^2$ 
can also be considered.

Our first theorem  is the following moderate deviation result.
\begin{theorem}\label{ModerateD}
Assume that the random variable $\xi_0$ satisfies $\|\xi_0\|_p<\infty$
for some $p>2$ and $\rho_n^2\rightarrow 0$ as $n\rightarrow \infty$.
Then for $x_n\ge 0$, $x_n^2\le 2\ln (U_{np}^{-1})$,
the moderate deviation result holds:
\begin{equation}\label{Eq:Th21a}
\mathbb{P}\left( S_{n}\geq x_{n}\sigma_{n}\right)  =(1-\Phi(x_{n}%
))(1+o(1))\text{ as }n\rightarrow\infty; 
\end{equation}
\begin{equation}\label{Eq:Th21b}
\mathbb{P}\left(  S_{n}\leq -x_{n}\sigma_{n}\right)  =(1-\Phi(x_{n}%
))(1+o(1))\text{ as }n\rightarrow\infty. 
\end{equation}
\end{theorem}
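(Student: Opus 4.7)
My plan is to follow the strategy of Peligrad et al.\ (2014b) after linearizing the double-indexed sum. Since the family $\{\xi_{-r,-s}\}$ is i.i.d., fixing any bijection $\phi:\mathbb N\to\mathbb Z^2$ and setting $c_{n,i}:=b_{n,\phi(i)}$, $\eta_i:=\xi_{-\phi(i)}$ yields a representation
$$S_n=\sum_{i=1}^{\infty} c_{n,i}\,\eta_i$$
as a weighted sum of i.i.d.\ random variables; convergence in $L^2$ and almost surely is given by Lemma \ref{Lem:Conv}. All the quantities $\sigma_n^2=\sum_i c_{n,i}^2$, $D_{np}=\sum_i|c_{n,i}|^p$, $U_{np}$, and $\rho_n^2=\max_i c_{n,i}^2/\sigma_n^2$ are invariant under the choice of $\phi$ and coincide with those defined from $\{b_{n,r,s}\}$.

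I would then execute the classical three-step scheme underlying Peligrad et al.\ (2014b). First, approximate $S_n$ by the finite truncation $S_n^{(M)}=\sum_{i\le M}c_{n,i}\eta_i$ and show via Chebyshev that the tail $S_n-S_n^{(M)}$ is negligible for large $M$, using $\sum_{i>M}c_{n,i}^2\to 0$. Second, for each $\eta_i$ split $\eta_i=\eta_i'+\eta_i''$ by truncating at a level chosen in terms of $x_n$ and $\sigma_n$; apply a sharp Cram\'er-type/exponential bound (e.g.\ Petrov's inequality) to the bounded part $\sum_i c_{n,i}\eta_i'$, which under the Lindeberg-type hypothesis $\rho_n^2\to 0$ yields the Gaussian asymptotic $(1-\Phi(x_n))(1+o(1))$. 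Third, handle the unbounded contribution $\sum_i c_{n,i}\eta_i''$ by a Fuk--Nagaev-type inequality and the assumption $\|\xi_0\|_p<\infty$.

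The main obstacle is aligning the admissible range $x_n^2\le 2\ln(U_{np}^{-1})$ with the constraint imposed by the Fuk--Nagaev bound: such a bound scales like $C\|\xi_0\|_p^p\,U_{np}\,x_n^{-p}$, which becomes $o(1-\Phi(x_n))\asymp x_n^{-1}e^{-x_n^2/2}$ precisely when $x_n^2\le 2\ln(U_{np}^{-1})$. This is where the definition of $U_{np}$ meshes with the moment exponent $p$, and where the condition $\rho_n^2\to 0$ is needed to secure Gaussian behavior of the truncated part. Finally, (\ref{Eq:Th21b}) follows by applying (\ref{Eq:Th21a}) to the random field with innovations $-\xi_0$, whose distribution has the same mean, variance, and $p$-th moment, leaving $\sigma_n^2$, $U_{np}$, and $\rho_n^2$ unchanged.
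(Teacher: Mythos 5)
Your linearization of the double sum and your symmetry argument for \eqref{Eq:Th21b} are fine, and your remark that the range $x_n^2\le 2\ln(U_{np}^{-1})$ is exactly where a Nagaev-type term of size $U_{np}x_n^{-p}$ becomes negligible relative to $1-\Phi(x_n)\asymp x_n^{-1}e^{-x_n^2/2}$ identifies the right bookkeeping. But two of your three steps would fail as written. First, the Chebyshev truncation: discarding the tail $S_n-S_n^{(M)}$ with only a second-moment bound produces an additive error that is polynomially small at best, while the target is a \emph{relative} error in a probability that, at the edge of the admissible range, is as small as $e^{-x_n^2/2}\approx U_{np}$; moreover the cutoff must be chosen jointly with $n$ (a sequence $M_n$), not ``for large $M$'' at fixed $n$. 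The paper's device is different and is the crux of the proof: choose $k_n$ so that the $\ell^2$-mass of the discarded coefficients is at most $\|\xi_0\|_p^2 D_{np}^{2/p}$, bound the remainder $R_n$ (which is \emph{not} thrown away) by Rosenthal's inequality to get $\mathbb{E}|R_n|^p\le 2C_p\mathbb{E}|\xi_0|^p D_{np}$, and then treat $S_n=M_n+R_n$ as a sum of $(2k_n+1)^2+1$ independent terms, so that the remainder contributes to the Lyapunov-type quantity only at order $U_{np}$ — exactly the size the admissible range of $x_n$ tolerates.

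Second, your step 2 claims that a Petrov/Cram\'er-type exponential inequality for the truncated part ``yields the Gaussian asymptotic $(1-\Phi(x_n))(1+o(1))$''. Such inequalities are one-sided upper bounds (typically $\exp(-x^2/2(1+o(1)))$ with no control of the constant), and the matching lower bound is the hard half of an \emph{exact} moderate deviation statement. The paper obtains the two-sided equivalence by invoking Frolov's theorem for row-wise independent triangular arrays (Theorem \ref{frolov}), and the actual content of the proof is verifying its hypotheses: $L_{np}\le(2C_p+1)\mathbb{E}(|\xi_0|^p)U_{np}\to 0$ (where $\rho_n^2\to 0$ enters via $U_{np}\le \rho_n^{p-2}$), the negative-tail condition $\Lambda_n(x^4,x^5,\varepsilon)\to 0$, and $x^2-2\ln(L_{np}^{-1})-(p-1)\ln\ln(L_{np}^{-1})\to-\infty$ on the range $x^2\le 2\ln(U_{np}^{-1})$. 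Your sketch never addresses the lower bound nor the $\Lambda_n$-type condition, and it deploys Fuk--Nagaev truncation of the innovations, which in this paper is the tool for the large deviation Theorem \ref{LD}, not for Theorem \ref{ModerateD}, where the moment hypothesis is exploited through Rosenthal's inequality instead. To repair your argument you would essentially have to reconstruct the Rosenthal-plus-Frolov scheme above.
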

\begin{proof}
We only need to prove the statement (\ref{Eq:Th21a}). The proof is a
modification of that of Corollary 3, part (iii) of Peligrad et al. (2014a), which
is given in the Supplementary Material, Peligrad et al. (2014b).
The main idea is to applying
Theorem \ref{frolov} in the Appendix for triangular arrays.
For this purpose, we decompose the partial sum $S_n$ as $S_n = M_n + R_n$,
where $M_n=\sum_{|r|\le k_n} \sum_{|s|\le k_n} b_{n,r,s}\xi_{-r, -s}$ for
some integer $k_n$ which will be chosen later and $R_n$ is the remainder
$$
R_n= \bigg(\sum_{|r|> k_n}
\sum_{s\in \mathbb{Z}} +\sum_{|r|\le k_n}\sum_{|s|> k_n} \bigg)b_{n,r,s}\xi_{-r, -s}.$$
By the Cauchy-Schwarz inequality, we have
\begin{equation*}
b_{n,r,s}^{2}\leq |\Gamma_n|\sum_{(j,k)\in \Gamma_n} a^2_{j+r, k+s}
\end{equation*}
and then,
\begin{equation} \label{Eq:tailsum1}
\sum_{r, s\in \mathbb{Z}}b_{n,r,s}^{2}
\le |\Gamma_n|\sum_{r, s\in \mathbb{Z}} \sum_{(j,k)\in \Gamma_n} a^2_{j+r, k+s}
= |\Gamma_n|^2\sum_{r, s\in \mathbb{Z}}a_{r,s}^{2}. 
\end{equation}
In the above, we have applied Fubini's theorem to change the order of summations.
Therefore, for every integer $n\ge 1$, we have $\sum_{r, s\in \mathbb{Z}}
b_{n,r,s}^{2}<\infty$ which yields $\sum_{r, s\in \mathbb{Z}}
|b_{n,r,s}|^{p}<\infty$ since $p>2$.

By applying Rosenthal's inequality (cf. de la Pe\~na and Gin\'e,
1999) to $R_n$, we see that there is a constant
$C_p$ such that
\begin{align*}
\mathbb{E} \big(|R_{n}|^{p} \big)&\leq C_{p} \Bigg[\bigg( \Big(\sum_{|r|> k_n}
\sum_{s\in \mathbb{Z}} +\sum_{|r|\le k_n}\sum_{|s|> k_n} \Big)b_{n,r,s}^{2}\bigg)^{p/2}\\
& \qquad \qquad +\mathbb{E} (|\xi_{0}|^{p})\bigg(\sum_{|r|> k_n}\sum_{s\in \mathbb{Z}}
+\sum_{|r|\le k_n}\sum_{|s|> k_n} \bigg) |b_{n,r,s}|^{p}\Bigg]. 
\end{align*}
Now for each positive integer $n$, we select integer $k_{n}$ 
large enough such that
\[
\bigg(\sum_{|r|> k_n}\sum_{s\in \mathbb{Z}} +\sum_{|r|\le k_n}\sum_{|s|> k_n} \bigg)
b_{n,r,s}^{2}
\leq \|\xi_{0}\|_{p}^{2} \bigg(\sum_{r,s\in \mathbb{Z}} |b_{n,r,s}|^{p}
\bigg)^{2/p}.
\]
This is possible because of (\ref{Eq:tailsum1}) and the fact that 
 $\sum_{r, s \in \mathbb Z} a_{r,s}^2< \infty$.

With the above selection of $k_n$, we obtain
\begin{equation}\label{p-rest}
\mathbb{E} \big(|R_{n}|^{p} \big)\leq 2C_{p}\mathbb{E}(|\xi_{0}|^{p})\sum_{r,s\in \mathbb{Z}} |b_{n,r,s}|^{p}.
\end{equation}
Similarly, we can verify that $M_n$ and thus $S_n$  also have finite moments of order $p$.

Since $M_n$ is the sum of $(2k_{n}+1)^2$ independent random variables, we can view
$S_{n} = M_n + R_n$ as the sum of $(2k_{n}+1)^2+1$ independent random variables
and apply Theorem \ref{frolov} to prove (\ref{Eq:Th21a}). As in the
proof of part (iii) of Corollary 2.3 in Peligrad et al. (2014b), we use
(\ref{p-rest}) to derive
\begin{equation*}
\begin{split}
L_{np}&= \frac1 {\sigma_n^p} \bigg(\sum_{|r|\le k_n} \sum_{|s|\le k_n}
\mathbb E \big[(b_{n,r,s}\xi_{-r, -s})^p I(b_{n,r,s}\xi_{-r, -s}\ge 0)\big] +
\mathbb E \big[R_n^p I(R_n\ge 0)\big]\bigg)\\
&\le (2C_p + 1)\mathbb{E}(|\xi_{0}|^{p})U_{np}.
\end{split}
\end{equation*}
Since $\rho_n^2 \to 0$ and $p > 2$ imply $U_{np} \to 0$ as $n\to \infty$,
we have  $L_{np}\rightarrow0$. Similarly, for all $x \ge 0$ such that
$ x^2 \le 2 \ln (U_{np}^{-1})$, we can verify that $\Lambda_{n}(x^{4},x^{5},
\varepsilon)\rightarrow0$ for any $\varepsilon>0$, and  $x^{2}
-2\ln(L_{np}^{-1})-(p-1)\ln\ln(L_{np}^{-1})\rightarrow-\infty$, as
$n \to \infty$. Hence the conditions of Theorem
\ref{frolov} are satisfied, and (\ref{Eq:Th21a}) follows.
\end{proof}

\begin{remark}\label{remark1}
The condition $\rho_n\rightarrow 0$ in Theorem \ref{ModerateD} and the following
Theorems \ref{LD} and \ref{MDLD} can be replaced by suitable conditions on
$|\Gamma_n|$ and $\sigma_n^2$ that are easier to verify. By H\"older's
inequality, we have $\rho_n\le \frac{\|a\|_u|\Gamma_n|^{1/v}} {\sigma_n}$,
where $1\le u\le 2$ and $v$ is the conjugate of $u$, $1/u+1/v=1$.
Therefore, we can replace the condition $\rho_n\rightarrow 0$ by
$\|a\|_u<\infty$ and $\frac{|\Gamma_n|^{1/v}}{\sigma_n}\rightarrow 0$.
In particular, if $\|a\|_1 < \infty$ which is
the short range dependence case, then $\rho_n\le \|a\|_1/\sigma_n$.
In this case, we can replace the condition $\rho_n\rightarrow 0$ by 
$\sigma_n\rightarrow \infty$ (as a consequence, we also have $|\Gamma_n|\rightarrow \infty$). 
See Mallik and Woodroofe (2011) for more information on bounds for $\rho_n$. If $\Gamma_n$
is a union of $l$ finitely many discrete rectangles, by Proposition 2 of
the same paper, $\rho_n\le 20\left(\frac{\sqrt{l}
\|a\|_2}{\sigma_n}\right)^{1/5}+\frac{8\sqrt{l}\|a\|_2}{\sigma_n}$. Therefore,
in this case, we can replace
the condition $\rho_n\rightarrow 0$ by $\|a\|_2<\infty$ and $\sigma_n\rightarrow
\infty$.
\end{remark}



Next, we study precise large deviations for the partial sums $S_n$ defined in (\ref{Def:S}).   We
will focus only on the case when  $\xi_0$ has a right regularly varying tail (see Remark 2.2 below for 
information on other interesting cases). More precisely, we assume 
that there is a constant $t>2$ such that
\begin{equation}\label{tail1}
\mathbb{P}(\xi_{0}\geq x)=\frac{h(x)}{x^{t}},\;\; \
\hbox{ as }\, x\rightarrow\infty.
\end{equation}
Here $h(x)$ is a slowly varying function at infinity. 
Namely, $h(x)$  is a measurable positive function satisfying $\lim_{x\rightarrow\infty}h(\lambda x)/h(x)=1$ 
for all constants $\lambda>0$.  Bingham et al. (1987) or Seneta (1976) provide systematic accounts on 
regularly varying functions. For reader's convenience, we collect some useful properties of slowly varying 
functions in  Lemma \ref{Karamata} in the Appendix.



Notice that condition \eqref{tail1}  is an assumption on the right tail of $\xi_0$.  
The left tail of $\xi_0$ can be arbitrary. In particular, it implies that  $\xi_0$ does not 
have $p$-th  moments  for $p>t$, and it may or may not have $p$-th moments for $p<t$.

The notion of regular variation such as defined in (\ref{tail1})  is closely related to large 
deviation results for sums of random variables or processes.
Such results have been proved by A.V. Nagaev (1969a, b) and S.V. Nagaev (1979) for partial 
sums of i.i.d. random variables, and have been extended to partial sums of certain stationary 
sequences by Davis and Hsing (1995), Mikosch and Samorodnitsky (2000), Mikosch and 
Wintenberger (2013), and Peligrad et al. (2014a, b). 

We now comment briefly on the connections and differences of the results and methods in the 
aforementioned references to those in the present paper. The approach of Davis and Hsing (1995) 
is based on weak convergence of 
point processes  and the link between the large deviation probability  and the asymptotic behavior 
of extremes.  As shown in Example 5.5 in Davis and Hsing (1995), their results are applicable to
a class of linear processes with short-range dependence. Moreover, as pointed out by  Mikosch 
and Wintenberger (2013, p.853), the method of Davis and Hsing (1995) could not be extended to 
the case of $t \ge 2$.

Mikosch and Samorodnitsky (2000) studied precise large deviation results  for a class  
linear processes with a negative drift. More specifically, they consider 
$$
X_n = -\mu + \sum_{j \in \mathbb Z} \varphi_{n-j} \varepsilon_j,  \ \ \ n \in {\mathbb Z},
$$
where $\mu > 0$ is a constant, $\{\varepsilon_j\}$ are i.i.d. innovations that satisfy a 
two-sided version of \eqref{tail1} and the coefficients $\{\varphi_{j}\}$ satisfy 
$\sum_{j \in \mathbb Z}|j \varphi_j| < \infty$. 
In particular, the process $\{X_n, n \in \mathbb Z\}$ is short-range dependent. 

Mikosch and Wintenberger (2013) established precise large deviation results for a stationary sequence 
$\{X_n, n \in \mathbb Z\}$ that satisfies the following (and some other technical) conditions:
(i) All finite dimensional distributions of $\{X_n, n \in \mathbb Z\}$ are regularly varying 
with the same index $\alpha$; and (ii) The anti-clustering conditions. See Mikosch and Wintenberger (2013)
for precise descriptions of these conditions. We remark that, even though their methods and results cover
a wide class of stationary sequences,  the condition (i) is a lot 
stronger than \eqref{tail1} and is not easy to verify for a general linear process.  Moreover, 
as pointed out by Mikosch and Wintenberger  (2013, page 856),  the condition (ii) excludes stationary 
sequences with ``long range dependencies of extremes". 

 
We believe that it would be interesting from both theoretical and application viewpoints to extend the large 
deviation results in Davis and Hsing (1995), Mikosch and Samorodnitsky (2000), Mikosch and 
Wintenberger (2013), and Peligrad et al. (2014a, b) to stationary random fields. The present paper 
is one step towards this direction. More specifically, we follow the approach of Peligrad et al. (2014a, b) and 
prove the following precise large deviation theorem, which is applicable to linear random fields with 
long-range dependence.
\begin{theorem}\label{LD}
Assume that  $\{b_{n,r,s}, r, s \in \mathbb Z\}$ is a sequence of positive
numbers with $\rho_n^2\rightarrow 0$ as $n\rightarrow \infty$ and  $\xi_0$ satisfies
condition (\ref{tail1}) for certain constant $t>2$.
For $x=x_{n}\geq C_t[\ln(U_{nt}^{-1})]^{1/2}$, where
$C_t>e^{t/2}(t+2)/\sqrt{2}$ is a constant, we have
\begin{equation}\label{Eq:LD2-2}
\begin{split}
\mathbb{P} \big(S_{n}\geq x \big)&= \big(1+o(1)\big)\sum_{r,s\in\mathbb{Z}}
\mathbb{P} \big(b_{n,r,s}\xi_{-r, -s}\geq
x \big)\\
&=  \big(1+o(1)\big) x^{-t}  \sum_{r,s\in\mathbb{Z}} b_{n,r,s}^t h\Big(\frac{x}{b_{n,r,s}}\Big),\ \ \ \text{ as }\ n\rightarrow\infty. 
\end{split}
\end{equation}
\end{theorem}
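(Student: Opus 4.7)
The plan is to adapt the strategy of Peligrad et al. (2014a, b) to the double-indexed innovations, exploiting the ``one-big-jump'' principle characteristic of heavy-tailed sums: the event $\{S_n \geq x\}$ is, up to lower-order terms, the event that exactly one weighted innovation $b_{n,r,s}\xi_{-r,-s}$ exceeds level $x$, while the aggregate contribution of the remaining terms is negligible. The double-index aspect causes no essential obstruction at this level, since $\{b_{n,r,s}\xi_{-r,-s}\}$ is a countable family of independent random variables which can be flattened into a single sequence as in the footnote. What matters is that $U_{nt} = \sum_{r,s} b_{n,r,s}^t / \sigma_n^t$ controls the collective $\ell^t$-size of the weights, and $\rho_n^2 \to 0$ ensures that no single coefficient dominates.

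First I would fix small $\varepsilon, \delta > 0$ and truncate each innovation at level $\delta x / b_{n,r,s}$: write $\xi_{-r,-s} = \xi'_{-r,-s} + \xi''_{-r,-s}$ with $\xi'_{-r,-s} = \xi_{-r,-s}\mathbf{1}_{\{b_{n,r,s}\xi_{-r,-s} \le \delta x\}}$ plus the symmetric lower truncation, and correspondingly $S_n = S'_n + S''_n$. For the lower bound, I would use a Bonferroni-type inclusion argument: on the event that a single index $(r,s)$ has $b_{n,r,s}\xi_{-r,-s} \ge (1+\varepsilon)x$ and all other weighted innovations sum to at least $-\varepsilon x$, we have $S_n \ge x$. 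The second event has probability tending to one by Chebyshev, since $\mathrm{Var}(S_n) = \sigma_n^2$ and the condition $x \ge C_t[\ln(U_{nt}^{-1})]^{1/2}$ combined with regular variation makes $\varepsilon x$ much larger than $\sigma_n$ in the relevant regime. The correction from pairs of large jumps is $O\bigl((\sum_{r,s}\mathbb{P}(b_{n,r,s}\xi_0 \ge \delta x))^2\bigr)$, which is $o(\sum_{r,s}\mathbb{P}(b_{n,r,s}\xi_0 \ge x))$ because each individual probability is small.

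For the upper bound, I would partition $\{S_n \ge x\}$ by the number of ``large jumps.'' The ``at-least-two jumps'' contribution is controlled as above by a union bound over pairs. On the complementary event (at most one jump), one replaces the single large innovation by its contribution and is reduced to bounding $\mathbb{P}(S'_n \ge x(1-\varepsilon))$ where $S'_n$ is a centered sum of independent, uniformly bounded summands with variance at most $\sigma_n^2$. Here I would invoke a Fuk--Nagaev or Bennett-type exponential inequality, obtaining a bound of the form $\exp(-c\, x^2/\sigma_n^2)$ up to corrections from the recentering. The threshold $C_t > e^{t/2}(t+2)/\sqrt{2}$ is precisely calibrated so that this exponential bound is of smaller order than $\sum_{r,s} b_{n,r,s}^t\, x^{-t} h(x/b_{n,r,s})$; this comparison uses Karamata's uniform convergence theorem for slowly varying functions, which appears as Lemma \ref{Karamata} in the Appendix. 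The equality of the two expressions in \eqref{Eq:LD2-2} is then exact, since $\mathbb{P}(b_{n,r,s}\xi_{-r,-s} \ge x) = b_{n,r,s}^t x^{-t} h(x/b_{n,r,s})$ term-by-term.

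The main obstacle, as I see it, is the sharp quantitative step in the upper bound: showing that the exponential tail of the truncated sum $S'_n$ is indeed negligible compared with the one-big-jump probability, with the explicit constant $e^{t/2}(t+2)/\sqrt{2}$. This forces one to track carefully the interplay between the truncation level $\delta x$, the variance bound $\sigma_n^2$, the index $t$ of regular variation, and the slowly varying factor $h$, uniformly in $(r,s)$. A secondary technical point is ensuring that the reordering of the doubly indexed sum and the application of the one-dimensional Fuk--Nagaev-type inequality are compatible with the $\ell^t$-based quantities $D_{nt}$ and $U_{nt}$ that appear naturally in the random-field setting; this requires the positivity assumption on $\{b_{n,r,s}\}$ to avoid cancellations and to identify $\sum_{r,s} \mathbb{P}(b_{n,r,s}\xi_{-r,-s} \ge x)$ as the correct asymptotic benchmark.
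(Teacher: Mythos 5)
Your overall strategy is the one the paper follows (after Peligrad et al. 2014a, b): a one-big-jump decomposition with truncation at a level proportional to $x$, a union/Bonferroni bound for multiple jumps, an exponential inequality for the truncated sum, and Karamata uniformity to identify $\sum_{r,s}\mathbb{P}(b_{n,r,s}\xi_{-r,-s}\ge x)$ as the benchmark; your observation that the double index can be flattened is also how the paper proceeds, via the Fuk--Nagaev inequality for doubly indexed infinite sums (Theorem \ref{FN}).

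The genuine gap is in your upper bound for the truncated sum. You claim a bound ``of the form $\exp(-c\,x^2/\sigma_n^2)$ up to corrections from the recentering.'' With summands truncated at $\delta x$ (i.e., at a level comparable to the deviation itself), no such purely Gaussian bound holds: Bennett/Bernstein degrades in this regime to a power-type bound of order $(\delta x^2/\sigma_n^2)^{-c/\delta}$, and a single near-truncation-level summand already rules out a bound $\exp(-cx^2/\sigma_n^2)$. What is actually available is the two-term Fuk--Nagaev bound used in (\ref{toshow_1}): a Gaussian-type term $\exp\bigl(-\alpha^{2}x^{2}/(2e^{m})\bigr)$ with $\alpha=2/(m+2)$, $m>t$, \emph{plus} a power term $\bigl(A_{n}(m;0,\varepsilon x)/(\beta\varepsilon^{m-1}x^{m})\bigr)^{\beta/\varepsilon}$. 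Your calibration of $C_t>e^{t/2}(t+2)/\sqrt{2}$ is correct for the first term (taking $m\downarrow t$ yields exactly this constant against $x^2\ge C_t^2\ln(U_{nt}^{-1})$), but the second term is not a recentering correction: it is the heavy-tailed part of the bound, and proving that it is $o\bigl(x^{-t}\sum_{r,s}b_{n,r,s}^{t}h(x/b_{n,r,s})\bigr)$ is a separate, essential step. In the paper this is where the choice $m>t$, a small $\varepsilon$, the H\"older-type estimate (\ref{rel-cnt}) on $D_{nt}$, and the slowly varying function properties of Lemma \ref{Karamata} enter; without this step the upper bound, and hence the first equality in (\ref{Eq:LD2-2}), is not established. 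Two minor points: the symmetric lower truncation you introduce is unnecessary (the left tail of $\xi_0$ is arbitrary, and the variance term $B_n^2(-\infty,\varepsilon x)\le 1$ in Theorem \ref{FN} absorbs it), and in your Chebyshev step it is $U_{nt}\to 0$ (a consequence of $\rho_n\to 0$) forcing $x_n/\sigma_n\to\infty$, not regular variation, that makes the remainder event have probability tending to one.
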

\begin{proof}
Since the second equality in (\ref{Eq:LD2-2}) follows directly from the first and (\ref{tail1}),
we only need to prove the first equality. The proof is essentially a modification of  that of Theorem 2.2 in Peligrad
et al. (2014b), by replacing the quantities $c_{ni}$ there by $b_{n,r,s}$,
and the sum $\sum_{i=1}^{ k_n}$ by the double sum $\sum_{r,s\in \mathbb{Z}}$.
A somewhat new ingredient for the proof is
to use a new version of the Fuk-Nagaev inequality for the double sums of infinitely
many random variables which is stated as Theorem \ref{FN} in the Appendix.  Hence we
will only sketch the main steps of the proof.

Without loss of generality, we normalize the partial sum $S_n$ by its variance
and assume
\begin{equation}
\sum_{r,s\in\mathbb{Z}}b_{n,r,s}^2=1\ \text{ and }\
\rho_n^2=\max_{r,s \in\mathbb{Z}}b_{n,r,s}^{2}\rightarrow0 \ \ \text{ as }
n\rightarrow\infty. \label{sum1}
\end{equation}
Then, for any constant $t > 2$, we have $U_{nt} = D_{nt}$ and $D_{nt} = \sum_{r,s\in\mathbb{Z}}
b_{n,r,s}^t \leq\max_{r,s \in\mathbb{Z}} b_{n,r,s}^{t-2}
\rightarrow0$, which implies that  $D_{nt}^{-1}\rightarrow\infty.$
Moreover, the sequence $S_n$ is stochastically
bounded (i.e., $\lim_{K\rightarrow\infty}\sup
_{n}\mathbb{P}(|S_{n}|>K)=0$) since $\mathbb{E} (S_n^2)=1$.
By following the proofs of Lemma 4.1 and Proposition 4.1 in Peligrad et al. (2014b),  
 for any $0<\eta<1,$
and $\varepsilon>0$ such that $1-\eta>\varepsilon$ and any $x_n \to \infty$, we have
\begin{equation} \label{ineqprop}
\begin{split}
&\bigg|\mathbb{P}(S_{n}\geq x_{n})-\mathbb{P}(S_{n}^{(\varepsilon x_{n})}\geq
x_{n})-\sum_{r,s\in\mathbb{Z}}\mathbb{P}(b_{n,r,s}\xi_{-r, -s}\geq(1-\eta)x_{n})\bigg|\\
& \leq
o(1)\sum_{r,s\in\mathbb{Z}}\mathbb{P}(b_{n,r,s}\xi_{-r, -s}\geq\varepsilon x_{n}) \\
&\qquad +
\sum_{r,s\in\mathbb{Z}}\mathbb{P}((1-\eta)x_{n}\leq b_{n,r,s}\xi_{-r, -s}
<(1+\eta)x_{n}),
\end{split}
\end{equation}
where  $S_{n}^{(\varepsilon x_{n})}=\sum_{r,s\in\mathbb{Z}}b_{n,r,s}
\xi_{-r, -s}I(b_{n,r,s}\xi_{-r, -s}<\varepsilon x_{n})$, $o(1)$ depends
on the sequence $x_{n},$ $\eta$ and $\varepsilon$ and
converges to $0$ as $n\rightarrow\infty.$
See also Lemma 4.2 and Remark 4.1 in Peligrad et al. (2014b) for sums of 
infinite many random variables. 

As in the proof of Theorem 2.2 in Peligrad et al. (2014b), by analyzing
the two terms of the right-hand side and the last term of the left-hand side
of \eqref{ineqprop}, we  derive that for
any fixed $\varepsilon>0$,
\begin{equation} \label{ineqLMD}
\mathbb{P}(S_{n}\geq x)= \big(1+o(1)\big) \sum_{r,s\in\mathbb{Z}}\mathbb{P}(b_{n,r,s}
\xi_{-r, -s}\geq x)+\mathbb{P}(S_{n}^{(\varepsilon x)}\geq x)
\end{equation}
as $n\rightarrow\infty$. It remains to show that the term 
$\mathbb{P}(S_{n}^{(\varepsilon x)} \geq x)$ is negligible compared with the 
first term in (\ref{ineqLMD}). To this end,
we apply Theorem \ref{FN} to the sequence $\{b_{n,r,s}
\xi_{-r, -s}, \, r, s \in \mathbb Z\}$ with $y = \varepsilon x$ to
derive that for any constant
$m >t$,
\begin{equation}\label{toshow_1}
\mathbb{P} \big(S_{n}^{(\varepsilon x)}\geq x \big)\leq
\exp\bigg(-\frac{\alpha^{2}x^{2}}
{2e^{m}} \bigg)+ \bigg( \frac{A_{n}(m;0,\varepsilon x)}
{\beta\varepsilon^{m-1}x^{m}}\bigg)^{\beta/\varepsilon},
\end{equation}
where $\beta=m/(m+2),$ $\alpha=1-\beta=2/(m+2)$ and we have used the fact that
$B^2_n(- \infty, \varepsilon x) \le 1$, which follows from (\ref{sum1}).

Then, following the proof of Theorem 2.2 in Peligrad et al. (2014b),
we can show that, for all
$x=x_{n}\geq C_t[\ln(U_{nt}^{-1})]^{1/2}$, where
$C_t>e^{t/2}(t+2)/\sqrt{2}$ is a constant, we have
\begin{equation} \label{toshow}
\begin{split}
\exp\biggl(-\frac{\alpha^{2}x^{2}}{2e^{m}}\biggl) &+\left(\frac{A_{n}(m;0,\varepsilon
x)}{\beta\varepsilon^{m-1}x^{m}}\right)  ^{\beta/\varepsilon}=o(1)\sum_{r,s\in\mathbb{Z}}
\frac{b_{n,r,s}^{t}}{x^{t}}h\bigg(\frac{x}{b_{n,r,s}}\bigg) \\
&=o(1)\sum_{r,s\in\mathbb{Z}} \sum_{r,s\in\mathbb{Z}}\mathbb{P}(b_{n,r,s}
\xi_{-r, -s}\geq x)\ \text{ as
}  \ n\rightarrow\infty.
\end{split}
\end{equation}
In particular, we use the observation
\begin{equation}
D_{nt}=\sum_{r,s\in \mathbb{Z}}b_{n,r,s}^{2\eta}b_{n,r,s}^{t-2\eta
}\leq \bigg(\sum_{r,s\in \mathbb{Z}}b_{n,r,s}^{2}\bigg)^{\eta} \bigg(\sum_{r,s\in \mathbb{Z}}b_{n,r,s}
^{(t-2\eta)/(1-\eta)}\bigg)^{1-\eta}. \label{rel-cnt}%
\end{equation}
Here we have omitted the details for deriving (\ref{toshow}) as it is
very similar to the proof in Peligrad et al. (2014b). Finally, by combining
(\ref{ineqLMD}),  (\ref{toshow}) and (\ref{toshow_1}),
we obtain the first equality in (\ref{Eq:LD2-2}). This completes the  proof 
of Theorem \ref{LD}.
\end{proof}


\begin{remark}
Besides the case of regularly varying tails such as \eqref{tail1}, large deviation results for sums of
independent random variables or linear procesess have been studied by several authors under 
the following two  conditions, respectively: (a)  $\xi_0$ satisfies the Cram\'er condition:   
there exists a constant $h_0 > 0$ such that 
$\mathbb E(e^{h \xi_0}) < \infty$ for $|h|\le h_0$;  
(b)  $\xi_0$ satisfies the Linnik condition: there is a constant $\gamma \in (0, 1)$ such that 
$\mathbb E(e^{| \xi_0|^\gamma}) < \infty$.  See Nagaev (1979), Jiang, et al. (1995), 
Saulis and Statulevi\u cius (2000), Djellout and Guillin (2001),  
Ghosh and Samorodnitsky (2008), Li, et al. (2009), among others.

In light of Theorem \ref{LD} and the above discussions, we think it would be interesting 
to study the following problems: 
\begin{itemize}
\item Study precise large deviation problems for linear random fields under the Cram\'er and Linnik
conditions. 
 
\item Extend the methods of  Davis and Hsing (1995), Mikosch and Samorodnitsky (2000), 
Mikosch and Wintenberger (2013) to establish large deviation results for  stationary random fields.
\end{itemize}
\end{remark}

Notice that the tail conditions of Theorems \ref{ModerateD} and \ref{LD}
are different since one involves the moment and the other just involves
the right tail behavior. Put these conditions together,
we have the following tail probabilities over all $x_n\ge c$ for some $c>0$. This theorem is a natural extension of 
the uniform moderate and large deviations for sums of i.i.d. random variables (cf. Theorem 1.9 in  S.V. Nagaev, 1979). 
\begin{theorem}\label{MDLD}
\label{mix}
Assume that $\xi_0$ satisfies $\|\xi_0\|_p<\infty$ for some $p>2$ and
the right tail condition (\ref{tail1}) for some constant $t>2$. Assume
also that $b_{n,r,s}>0$ and $\rho_n^2 \rightarrow 0$ as $n\rightarrow
\infty$. Let $(x_{n})_{n\ge1}$ be any sequence such that for some $c>0$
we have $x_{n}\geq c$ for all $n$. Then, as
$n\rightarrow\infty$,
\begin{equation}
\mathbb{P}\left(  S_{n}\geq x_{n}\sigma_{n}\right)
=(1+o(1))\bigg[x_n^{-t}  \sum_{r,s\in\mathbb{Z}} b_{n,r,s}^t h\Big(\frac{x_n}{b_{n,r,s}}\Big)+1-\Phi(x_{n})\bigg].
\label{MD+LD}%
\end{equation}
\end{theorem}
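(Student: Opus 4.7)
The plan is to deduce Theorem \ref{MDLD} by combining Theorems \ref{ModerateD} and \ref{LD} through a case analysis on the size of $x_n$. Since $\xi_0$ has both finite $p$-th moment and right tail index $t$, one has $p\le t$, and in particular both $U_{np}$ and $U_{nt}$ tend to $0$. After the usual normalisation $\sigma_n=1$, $\sum_{r,s} b_{n,r,s}^2=1$ (so that every $b_{n,r,s}\in[0,1]$ and $\rho_n^2=\max b_{n,r,s}^2\to 0$), I would introduce two diverging thresholds $y_n:=\sqrt{2\ln(U_{np}^{-1})}$ coming from Theorem \ref{ModerateD} and $z_n:=C_t\sqrt{\ln(U_{nt}^{-1})}$ coming from Theorem \ref{LD}, and partition $\mathbb N$ into subsequences along which one or the other of these results applies.

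On the moderate subsequence $\{n:x_n\le y_n\}$, Theorem \ref{ModerateD} gives $\mathbb P(S_n\ge x_n\sigma_n)=(1+o(1))(1-\Phi(x_n))$, so it suffices to show that the heavy-tail piece $x_n^{-t}\sum_{r,s} b_{n,r,s}^t h(x_n/b_{n,r,s})$ is $o(1-\Phi(x_n))$ there. Using Mills' ratio, $1-\Phi(x_n)\ge c\, x_n^{-1} e^{-x_n^2/2}\ge c\, x_n^{-1}\sqrt{U_{np}}$ on this range; combining this with Potter's bound on $h$ (Lemma \ref{Karamata} in the Appendix) and the elementary estimate $\sum_{r,s} b_{n,r,s}^{t-\varepsilon}\le \rho_n^{t-\varepsilon-2}$ for small $\varepsilon>0$ makes the domination transparent. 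On the large subsequence $\{n:x_n\ge z_n\}$, Theorem \ref{LD} supplies the heavy-tail asymptotic directly, while $1-\Phi(x_n)\le e^{-x_n^2/2}\le U_{nt}^{C_t^2/2}$ is exponentially smaller than the polynomial-type sum $x_n^{-t}\sum b_{n,r,s}^t h(x_n/b_{n,r,s})$ (using $C_t>e^{t/2}(t+2)/\sqrt 2$ and the Karamata bound on $h$), giving \eqref{MD+LD} on that subsequence as well.

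The main obstacle is the intermediate \emph{gap} $y_n<x_n<z_n$, on which neither Theorem \ref{ModerateD} nor \ref{LD} is quoted as stated. To close it, I would re-examine the two proofs and verify that their estimates extend into a common range by choosing the truncation level $k_n$ in the proof of Theorem \ref{ModerateD} and the auxiliary parameter $m$ in the Fuk--Nagaev inequality \eqref{toshow_1} judiciously. In this intermediate range the Gaussian term $1-\Phi(x_n)$ and the heavy-tail sum may be of genuinely comparable order, so one should not expect either to dominate; showing that their sum captures $\mathbb P(S_n\ge x_n\sigma_n)$ is a matter of careful constant-chasing in the estimates underlying Theorems \ref{frolov} and \ref{FN} and of matching the $(1+o(1))$ factors from the two asymptotics at the boundary. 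This bookkeeping, rather than any new probabilistic idea, is the principal technical difficulty in passing from Theorems \ref{ModerateD} and \ref{LD} to the uniform statement \eqref{MD+LD}.
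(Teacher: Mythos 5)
Your treatment of the two extreme regimes is essentially sound (one small slip: on the range $x_n^2\le 2\ln(U_{np}^{-1})$ Mills' ratio gives $1-\Phi(x_n)\ge c\,x_n^{-1}U_{np}$, not $c\,x_n^{-1}\sqrt{U_{np}}$; the domination of the Gaussian term still follows since, with $\varepsilon<t-p$, Potter's bound yields $\sum_{r,s}b_{n,r,s}^t h(x/b_{n,r,s})\le C x^{\varepsilon}\rho_n^{t-p-\varepsilon}D_{np}$). The genuine gap is exactly where you locate it, the intermediate range $\sqrt{2\ln(U_{np}^{-1})}<x_n<C_t\sqrt{\ln(U_{nt}^{-1})}$, but your proposal leaves it unproved and, more importantly, points at the wrong knobs for closing it. Tuning the Rosenthal splitting level $k_n$ or the Fuk--Nagaev parameter $m$ cannot extend Theorem \ref{ModerateD}: Frolov's Theorem \ref{frolov} applied to $S_n$ itself requires $x^{2}-2\ln(L_{np}^{-1})-(p-1)\ln\ln(L_{np}^{-1})\to-\infty$, and this condition simply fails beyond the moderate range, no matter how the constants are chased. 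In the intermediate range the big summands $b_{n,r,s}\xi_{-r,-s}\ge\varepsilon x$ are no longer negligible, so no refinement of the normal-approximation estimates for the full sum can produce \eqref{MD+LD}.

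The mechanism the paper uses (and which your sketch is missing) is: (a) the exact decomposition \eqref{ineqLMD}, valid for all $x_n\to\infty$, which isolates the heavy-tail contribution $\sum_{r,s}\mathbb{P}(b_{n,r,s}\xi_{-r,-s}\ge x)$ and leaves the \emph{truncated} sum $S_n^{(\varepsilon x)}$; (b) Lemma \ref{frolov-trunc}, which gives $\mathbb{P}(S_n^{(\varepsilon x)}\ge x)=(1-\Phi(x))(1+o(1))$ on the range $x^{2}\le c\ln(L_{np}^{-1})$ for \emph{any} $c<1/\varepsilon$ --- the truncation parameter $\varepsilon$, not $k_n$ or $m$, is the free knob that widens the Gaussian range arbitrarily; and (c) the interpolation \eqref{rel-cnt} with $\eta=(t-p)/(t-2)$, giving $D_{nt}\ll D_{np}\ll D_{nt}^{(p-2)/(t-2)}$, so that the widened Gaussian range covers $x\le C[\ln(D_{nt}^{-1})]^{1/2}$ with $C$ arbitrary, while the large-deviation estimates hold for $x>c_1[\ln(D_{nt}^{-1})]^{1/2}$ with a fixed $c_1$; choosing $c_1<C$ makes the two regimes overlap, so there is no gap at all and both terms appear additively in \eqref{MD+LD} without any delicate matching of $(1+o(1))$ factors at a boundary. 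Without steps (a)--(c) your argument does not yield the theorem in the intermediate range, which is precisely the range where the statement has new content.
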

\begin{proof}
The proof is a modification of that of Theorem 2.1 and Corollary 2.3,
part (i), in Peligrad et al. (2014b). We sketch the proof here for completeness. Without loss of generality we
may assume $2<p<t$. Let $x=x_{n}\rightarrow \infty.$ For simplicity,
we assume (\ref{sum1}). Under the condition in this theorem, as in
the proof of Theorem \ref{LD}, we have that (\ref{ineqLMD}) holds.
Denote
\[
X_{n, r,s}^{\prime}=b_{n,r,s}\xi_{-r,-s}I(b_{n,r,s}\xi_{-r,-s}\leq\varepsilon x).
\]

We now apply Lemma \ref{frolov-trunc} to the second term
in the right-hand side of (\ref{ineqLMD}). To this end, we decompose
the sum $S_n^{(\varepsilon x)}$ as a finite sum, i.e., the sum of $\sum_{|r|\le k_n}
\sum_{|s|\le k_n} X_{n, r,s}^{\prime}$ with $(2k_n+1)^2$ terms for some
$k_n$ and the remainder
$$
R_n'= \bigg(\sum_{|r|> k_n}\sum_{s\in \mathbb{Z}} +
\sum_{|r|\le k_n}\sum_{|s|> k_n} \bigg)
X_{n, r,s}^{\prime}.
$$
By Rosenthal's inequality (cf. de la Pe\~na and Gin\'e, 1999), it is
easy to derive
\[
\mathbb{E}|R_{n}^{\prime}|^{p}\leq2C_{p}^{\prime}\mathbb{E}|\xi_{0}|^{p}%
\sum_{r,s}|b_{n,r,s}|^{p}
\]
for some constant $C_p'$. Consequently, the quantity $L_{np}$ in
Lemma \ref{frolov-trunc} is bounded by
\[
L_{np}\leq(2C_{p}^{\prime}+1)\mathbb{E}|\xi_{0}|^{p}
\sum_{r,s}|b_{n,r,s}|^{p}=(2C_{p}^{\prime}+1)D_{np}\mathbb{E}|\xi_{0}|^{p}.
\]
See also the proof of Corollary 2.3, part (i), in Peligrad et al. (2014b).
Then, by Lemma \ref{frolov-trunc} if $x^{2}\leq c\ln((2C_{p}^{\prime
}+1)D_{np}\mathbb{E}|\xi_{0}|^{p})^{-1}$ for $c<1/\varepsilon$, we have
$x^{2}\leq c\ln(L_{np}^{-1})$ for $c<1/\varepsilon$ and
\begin{equation}\label{truncnorm}
\mathbb{P}(S_n^{(\varepsilon x)}\geq x)  =(1-\Phi(x))(1+o(1)).
\end{equation}
Notice that (\ref{truncnorm}) also holds for $x^{2}\leq c\ln(D_{np})^{-1}$
for any $c<1/\varepsilon$
and large enough $n$ since $D_{np}\rightarrow0$. Recall that $2<p<t$.
By applying (\ref{rel-cnt}) with $\eta=(t-p)/(t-2)$, we have
\begin{equation*}
D_{nt}\ll D_{np}\ll(D_{nt})^{(p-2)/(t-2)}. 
\end{equation*}
Then  (\ref{MD+LD}) holds for $0<x\leq C[\ln(D_{nt}^{-1})]^{1/2}$
with $C$ an arbitrary positive number. On the other hand,  there is a
constant $c_{1}>0$ such that for $x>c_{1}[\ln(D_{nt}^{-1})]^{1/2},$
we also have
\[
\mathbb{P}\left(  S_{n}\geq x\right)  =(1+o(1))\sum_{r,s\in \mathbb{Z}}%
\mathbb{P}(b_{n,r,s}\xi_{0}\geq x)
\]
and%
\[
1-\Phi(x)=o\bigg(\sum_{r,s\in \mathbb{Z}}\mathbb{P}(b_{n,r,s}\xi_{0}\geq x)\bigg).
\]
See also the proof of Theorem 2.1 in Peligrad et al. (2014b). By choosing
$c_{1}<C$, (\ref{MD+LD}) holds for all $x=x_n\rightarrow \infty$. If the
sequence $x_n$ is bounded, by Theorem \ref{ModerateD}, we have the moderate
deviation result. Since $x_n\ge c>0$, the second part in the right side of
(\ref{MD+LD}) is dominating as $n\rightarrow \infty$.
This finishes the proof.
\end{proof}


\section{Applications}

In this section, we provide two applications of the main results in Section 2,
one to nonparametric regression and the other to the Davis-Gut law for linear
random fields.

\subsection{Nonparametric regression}
We first provide an application of the deviation results in nonparametric
regression estimate. Consider the following regression model
$$
Y_{n,j,k}=g(z_{n,j,k})+X_{n,j,k}, \quad (j,k)\in \Gamma_n,
$$
where $g$ is a bounded continuous function on $\mathbb{R}^d$,
$z_{n, j,k}$'s are the fixed design points over $\Gamma_n \subseteq
\mathbb{Z}^2$ with values in a compact subset of $\mathbb{R}^d$, and
$X_{n,j,k}= \sum_{r,s\in\mathbb{Z}}a_{r,s}\xi_{n, j-r,k-s}$ is a linear random
field over $\mathbb{Z}^2$ with mean zero i.i.d. innovations $\xi_{n,r,s}$.
Regression models with independent or weakly dependent random field errors
have been studied by several authors including El Machkoui (2007), 
El Machkouri and Stoica (2010), 
Hallin et al. (2004a). 
For related papers that deal with density estimations for random fields,
see for example Tran (1990), Hallin et al. (2004b).


An estimator for the function $g$ on the basis of sample pairs
$(z_{n,j,k}, Y_{n,j,k})$, $(j,k)\in \Gamma_n$,
is the following general linear smoother:
\begin{equation*}\label{Eq:K}
g_n(z)=\sum_{(j,k)\in\Gamma_n} w_{n,j,k}(z)Y_{n, j,k},
\end{equation*}
where $w_{n,j,k}(\cdot)$'s are weight functions on $\mathbb R^d$.
In the particular case of kernel regression estimation,
$w_{n,j,k}(z)$ has the form
$$
w_{n,j,k}(z)=\frac{K(\frac{z-z_{n,j,k}}{h_n})}
{\sum_{(j',k')\in\Gamma_n} K(\frac{z-z_{n,j',k'}}{h_n})},
$$
where $K: \mathbb{R}^d\rightarrow \mathbb{R}^+$ is a kernel function
and $h_n$ is a sequence of bandwidths which goes to zero as $|\Gamma_n|
\rightarrow \infty$. Gu and Tran (2009) developed central limit theorem
and the bias for the fixed design regression estimate $g_n(z)$ in the
case when $X_{n,j,k}=\xi_{n,j,k}$ for all $(j,k)\in \Gamma_n$.

Theorems 2.1-2.3 in Section 2 can be applied to study, for every
$z \in {\mathbb R}^d$, the speed of the a.s. convergence of
$g_n(z) -\mathbb E g_n(z) \to 0$, or $g_n(z) \to g(z)$ if the weight
functions are chosen to satisfy the condition $\sum_{(j,k)\in\Gamma_n}
w_{n,j,k}(z)  = 1$, which is the case in kernel regression estimation.

Let $S_n:=g_n(z)-\mathbb{E}g_n(z)$. Then it can be written as
\begin{equation*}\label{res}
S_n = \sum_{(j,k)\in\Gamma_n} w_{n,j,k}(z)X_{n, j,k}
=\sum_{r,s\in\mathbb{Z}} b_{n,r,s}\xi_{n,-r, -s},
\end{equation*}
where $b_{n,r,s}=\sum_{(j,k)\in\Gamma_n} w_{n,j,k}(z) a_{j+r, k+s}$. We
choose the weight functions $w_{n,j,k}(z)$, the coefficients
$\{a_{r,s}\}$ and the random variable $\xi_0$ to satisfy the conditions
of Theorems \ref{ModerateD}. Then for $x_n = \sqrt{ 2\ln (U_{np}^{-1})}$,
(see Section 2 for the definition of $U_{np}$), we have
\begin{align*}
\mathbb{P}\big( |S_{n}|\geq x_{n}\sigma_{n}\big)  = 2(1-\Phi(x_{n}))
(1+o(1))\quad \text{ as }n\rightarrow\infty,
\end{align*}
where $\sigma_n^2=\sum_{r, s\in\mathbb{Z}}b_{n,r,s}^2={\rm Var}(g_n(z))$.

If $U_{np} \to 0$ is fast enough such that $\sum_n(1-\Phi(x_{n}))<\infty$,
then we can derive by using the Borel-Cantelli lemma the following upper
bound on the speed of convergence of $g_n(z)-\mathbb{E}g_n(z)$.
\begin{equation}\label{LIL1}
\limsup_{n\rightarrow \infty}\frac{|g_n(z)-\mathbb{E}g_n(z)|}
{\sigma_n\sqrt{2\ln (U_{np}^{-1}) }}
\le 1, \qquad \hbox{ a.s.}
\end{equation}
Under further conditions, we may put (\ref{LIL1}) in a more familiar form.
Since $p > 2$, we have $U_{np} \le |\rho_n|^{p-2}$. If we have
information on the rate for $\rho_n \to 0$, say,
$|\rho_n| \le (\ln n)^{-1/(p-2)}$, then we obtain an upper bound
which coincides with the law of the iterated logarithm:
\begin{equation}\label{LIL2}
\limsup_{n\rightarrow \infty}\frac{|g_n(z)-\mathbb{E}g_n(z)|}
{\sigma_n\sqrt{2\ln \ln n}}
\le1, \qquad \hbox{ a.s.}
\end{equation}

In the particular case of $X_{n,j,k}=\xi_{n,j,k}$,  $b_{n,r,s}
=w_{n,r,s}(z)$ if $(r,s)\in \Gamma_n$ and, otherwise $b_{n,r,s}=0$,
we have
$D_{nt}=\sum_{(r,s)\in \Gamma_n} w_{n,r,s}^t(z)$, $U_{nt}
=(D_{n2})^{-t/2}D_{nt}$
and
$$
\sigma_n^2=\mathbb{E}(S_n^2)=D_{n2}=\sum_{(r,s)\in \Gamma_n} w_{n,r,s}^2(z).
$$
Hence, under certain conditions on the weight functions $w_{n,j,k}(z)$,
we can obtain from (\ref{LIL1}) or (\ref{LIL2}) the speed of convergence
of $g_n(z)- \mathbb{E}g_n(z) \to 0$, which compliments the results in
Gu and Tran (2009).



\subsection{A Davis-Gut law of the iterated logarithm}

Now we apply the moderate deviation result, Theorem \ref{ModerateD},
to prove a Davis-Gut type law for linear random fields. See Davis (1968),
Gut (1980), Li (1991) and Li and Rosalsky (2007) for the Davis-Gut laws
for partial sums of i.i.d. random variables. The Davis-Gut type law for 
linear processes with short memory (short-range dependence)
was  developed in Chen and Wang (2008).

For a linear random field defined in (\ref{rf}), we consider the partial sum (\ref{Def:S}) 
with $\Gamma_n=[1,n]^2\cap \mathbb{Z}^2$, and assume the following condition: 
\begin{itemize}
\item[(DG)]\, $ \|\xi_0\|_p<\infty$ for some $p>2$ and $\{a_{r,s}\}$ satisfies either 
\begin{align}\label{con1}
A:=\sum_{r,s\in\mathbb{Z}}|a_{r,s}|<\infty,  \;\;a:=\sum_{r,s\in\mathbb{Z}}a_{r,s}\ne 0,
\end{align}
or 
\begin{equation}\label{la}
a_{r,s}=(|r|+|s|)^{-\beta}L(|r|+|s|) b\Big(\frac{r} {\sqrt{r^2+s^2}},\, \frac{ s} {\sqrt{r^2+s^2}}\Big)
\end{equation}
for $r\ne 0$ or $s\ne 0$, where $\beta \in(1,2)$, $L(\cdot)$ is a slowly varying function  
at infinity,  $b(\cdot, \cdot)$ is a bounded piece-wise continuous function defined on the 
unit circle.
\end{itemize}

Under the condition (\ref{la}), $\sum_{r,s\in\mathbb{Z}}|a_{r,s}|=\infty$. In the literature,
the random field (\ref{rf}) is said to have long memory or long range dependence. The 
following lemma gives the order of the quantity $D_{np}$ (see the definition in Section 2) 
under the condition (\ref{la}). Recall that $a_{n}\propto b_n$
means that $a_{n}/b_{n}\rightarrow C$ as $n\rightarrow \infty$
for some constant $C>0$. 

\begin{lemma}\label{Dorder}
Assume (\ref{la}), then  for $p>2$, 
$$D_{np}=\sum_{r, s\in \mathbb{Z}}|b_{n,r,s}|^p=O \big(n^{p(2-\beta)+2}L^p(n)\big).$$ 
\end{lemma}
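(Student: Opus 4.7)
The plan is to bound $|b_{n,r,s}|$ pointwise and then split the outer sum over $(r,s)$ into a bounded ``near'' region and an ``far-away'' tail. After the change of variables $u=j+r$, $v=k+s$, we have
$$
b_{n,r,s}=\sum_{u=1+r}^{n+r}\sum_{v=1+s}^{n+s}a_{u,v}.
$$
Condition \eqref{la} together with the boundedness of $b(\cdot,\cdot)$ gives a bound of the form $|a_{u,v}|\le C_0(|u|+|v|)^{-\beta}L(|u|+|v|)$ for all $(u,v)\ne(0,0)$, where (possibly after redefining) $L$ may be taken to be a slowly varying function appearing in Lemma \ref{Karamata}.

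For the near region, I would take the range $|r|+|s|\le An$ for a fixed constant $A$ chosen so that the shifted rectangle $[1+r,n+r]\times[1+s,n+s]$ is contained in a box of side comparable to $n$ around the origin. Using the pointwise bound on $|a_{u,v}|$ and replacing the double sum by an integral, I would compute
$$
\sum_{u,v\in [-A'n,A'n],\,(u,v)\ne 0}(|u|+|v|)^{-\beta}L(|u|+|v|)\ \ll\ \int_1^{A''n}\!\!\!r\cdot r^{-\beta}L(r)\,dr\ \ll\ n^{2-\beta}L(n),
$$
the last step by Karamata's theorem for the regularly varying function $r^{1-\beta}L(r)$ with exponent $1-\beta>-1$. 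Since there are $O(n^2)$ values of $(r,s)$ in the near region, their contribution to $D_{np}$ is $O\bigl(n^{2}\cdot (n^{2-\beta}L(n))^p\bigr)=O\bigl(n^{p(2-\beta)+2}L^p(n)\bigr)$, which matches the target bound.

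For the far region $|r|+|s|>An$, the shifted rectangle lies entirely outside a ball of radius comparable to $|r|+|s|$ around the origin, so on this rectangle $|a_{u,v}|\le C(|r|+|s|)^{-\beta}L(|r|+|s|)$ (using the uniform convergence property of slowly varying functions). Multiplying by the $n^{2}$ area of the rectangle yields $|b_{n,r,s}|\ll n^{2}(|r|+|s|)^{-\beta}L(|r|+|s|)$. Raising to the $p$-th power and summing, using that the number of lattice points with $|r|+|s|=R$ is of order $R$, gives
$$
\sum_{|r|+|s|>An}|b_{n,r,s}|^p\ \ll\ n^{2p}\sum_{R>An}R^{1-\beta p}L^p(R)\ \ll\ n^{2p}\cdot n^{2-\beta p}L^p(n)=n^{p(2-\beta)+2}L^p(n),
$$
where the tail sum converges and is dominated by its boundary term because $\beta p>2$ (again by Karamata applied to the regularly varying tail).

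The main obstacle is handling the transition zone where the origin lies near the boundary of the shifted rectangle; in this regime neither the ``rectangle far from $0$'' estimate nor the ``rectangle covers a neighborhood of $0$'' estimate is sharp, and one must verify that the rougher bound $|b_{n,r,s}|\ll n^{2-\beta}L(n)$ continues to apply. This is where the uniform convergence theorem for slowly varying functions (part of Lemma \ref{Karamata}) is essential, since it lets one replace $L(|u|+|v|)$ by $L(n)$ uniformly over the rectangle. Once that is handled, combining the near and far estimates gives the claim.
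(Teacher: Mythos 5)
Your argument is correct and follows essentially the same route as the paper: bound $|b_{n,r,s}|$ by $Cn^{2-\beta}L(n)$ on an $O(n^2)$-size near region and by $Cn^{2}(|r|+|s|)^{-\beta}L(|r|+|s|)$ on the far region, then sum using the slow-variation estimates of Lemma \ref{Karamata}; the paper merely organizes the far region into coordinate strips ($r>n$, $r<-2n$, $s>n$, $s<-2n$, plus a central box) instead of the set $|r|+|s|>An$. The ``transition zone'' you flag is in fact already absorbed into your near region (any shifted rectangle approaching the origin has $|r|\le n$ and $|s|\le n$, hence $|r|+|s|\le An$, where your crude bound by the full sum over a box of side comparable to $n$ applies), so no additional argument is needed there.
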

\begin{proof}
We use the properties of slowly varying functions as stated in Lemma \ref{Karamata}, 
the condition $1<\beta<2$ and thus $1-\beta>-1$ and $1-p\beta<-1$ throughout the proof.
We also use $C>0$ as a generic constant in the proof.  
First we consider the case $r>n$. Since $b(\cdot, \cdot)$ is bounded,
\begin{align}
|b_{n,r,s}|&\le C\sum_{j, k=1}^n(j+r+|k+s|)^{-\beta}L(j+r+|k+s|)\label{r>n}\\
&\propto n\sum_{k=1}^n(r+|k+s|)^{-\beta}L(r+|k+s|)\notag\\
&\propto n^2(r+|s|)^{-\beta}L(r+|s|).\notag
\end{align}
Then 
\begin{align}
&\sum_{s\in \mathbb{Z}, r>n}|b_{n,r,s}|^p=\sum_{|s|\le n, r>n}|b_{n,r,s}|^p+\sum_{|s|> n, r>n}|b_{n,r,s}|^p\notag\\
&\ \ \le Cn\sum_{r>n}n^{2p}r^{-p\beta}L^p(r)+2C\sum_{s> n, r>n}n^{2p}(r+s)^{-p\beta}L^p(r+s)\notag\\
&\ \ \propto n^{2p+1}n^{1-p\beta}L^p(n)+\sum_{r>n}n^{2p}(r+n)^{1-p\beta}L^p(r+n)\notag\\
&\ \ \propto n^{p(2-\beta)+2}L^p(n)+n^{2p}n^{2-p\beta}L^p(n)\notag\\
&\ \ =2n^{p(2-\beta)+2}L^p(n).\label{term1}
\end{align}
For the case $r<-2n$, let $R=-r-n$, then $R>n$ and  
\begin{align*}
|b_{n,r,s}|&\le C\sum_{j, k=1}^n(-j-r+|k+s|)^{-\beta}L(-j-r+|k+s|)\\
&=C\sum_{j, k=1}^n(-j+n+R+|k+s|)^{-\beta}L(-j+n+R+|k+s|)\\
& \propto n^2(|r|+|s|)^{-\beta}L(|r|+|s|).  
\end{align*}
Hence, similarly to \eqref{term1}, we have
\begin{align}
\sum_{s\in \mathbb{Z}, r<-2n}|b_{n,r,s}|^p=O \big(n^{p(2-\beta)+2}L^p(n)\big).
\end{align}
By symmetry, we also have 
\begin{align}
\sum_{r\in \mathbb{Z}, s>n}|b_{n,r,s}|^p=O\big(n^{p(2-\beta)+2}L^p(n)\big), 
\end{align}
and 
\begin{align}
\sum_{r\in \mathbb{Z}, s<-2n}|b_{n,r,s}|^p=O\big(n^{p(2-\beta)+2}L^p(n)\big).
\end{align}
In the case $-2n\le r, s\le n$, 
\begin{align*}
|b_{n,r,s}|&\le C\sum_{j, k=1}^n(|j+r|+|k+s|)^{-\beta}L(|j+r|+|k+s|)\\
&\le 4C\sum_{j, k=1}^{2n}(j+k)^{-\beta}L(j+k)\\
&\ll \sum_{j=1}^{2n}j^{1-\beta} \max_{1\le k\le 2n}L(j+k)\\
&\propto n^{2-\beta} \max_{1\le k\le 2n}L(2n+k)\propto n^{2-\beta} L(n).
\end{align*}
Hence, 
\begin{align}\label{term5}
\sum_{-2n\le s, r\le n}|b_{n,r,s}|^p\ll n^2n^{p(2-\beta)}L^p(n)
=n^{p(2-\beta)+2}L^p(n).
\end{align}
Putting (\ref{term1})-(\ref{term5}) together, we complete the proof of the lemma. 
\end{proof}

The theorem below gives a
Davis-Gut type law for linear random fields that satisfy condition (DG).
\begin{theorem}\label{DG}
Assume condition (DG).  Let $h(\cdot)$ be a positive nondecreasing function on
$[c,\infty)$ for some constant  $c\ge 1$,
such that $\int_c^\infty (th(t))^{-1}dt=\infty$. Let $\Psi(t)=\int_c^t
(sh(s))^{-1}ds$, $t\ge c$. Let
$m=\arg\min_{t\ge c, t\in\mathbb{N}}\{\Psi(t)>1\}$. Then for real numbers
$\varepsilon$ and $n\ge m$,
we have
\begin{align*}
 \P \left(|S_n|>(1+\varepsilon)\sigma_n\sqrt{2\ln\Psi(n)}\right)
\propto \frac{1}{\sqrt{\ln\Psi(n)}}\Psi(n)^{-(1+\varepsilon)^2}.
\end{align*}
Define
\begin{align*}
S_{\Psi}:=\sum_{n=m}^\infty\frac{1}{nh(n)} \P\left(|S_n|>(1+\varepsilon)
\sigma_n\sqrt{2\ln\Psi(n)}\right).
\end{align*}
Then  $S_{\Psi}<\infty$ if $\varepsilon>0$ and $S_{\Psi}=\infty$
if $\varepsilon\le 0$.
\end{theorem}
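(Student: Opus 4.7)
The strategy is to deduce the theorem from Theorem~\ref{ModerateD} applied at the threshold $x_n := (1+\varepsilon)\sqrt{2\ln\Psi(n)}$ followed by (i) Mill's ratio $1-\Phi(x)\sim(x\sqrt{2\pi})^{-1}e^{-x^2/2}$ to convert the Gaussian tail into the quantity $\Psi(n)^{-(1+\varepsilon)^2}/\sqrt{\ln\Psi(n)}$, and (ii) an integral test, with the substitution $u=\Psi(t)$, to analyze the series $S_\Psi$.

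\textbf{Admissibility of $x_n$.} The serious prerequisite is to verify the moderate-deviation range condition $x_n^2\le 2\ln(U_{np}^{-1})$. Since $\Gamma_n=[1,n]^2$, both branches of (DG) force $U_{np}$ to decay polynomially in $n$. In the short-range case (\ref{con1}), $\sigma_n\sim|a|\,n$ and, by Remark~\ref{remark1}, $\rho_n=O(n^{-1})$ and $U_{np}\le\rho_n^{p-2}=O(n^{-(p-2)})$. In the long-range case (\ref{la}), Lemma~\ref{Dorder} gives $D_{np}=O(n^{p(2-\beta)+2}L^p(n))$, and the same argument applied with $p=2$ together with a matching lower bound on $\sigma_n^2=D_{n2}$ yields $U_{np}=O(n^{2-p})$. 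In either case $\ln(U_{np}^{-1})\gtrsim\ln n$. Because $h$ is nondecreasing with $h(c)>0$, we have $\Psi(n)\le h(c)^{-1}\ln(n/c)$, so $\ln\Psi(n)\le\ln\ln n+O(1)$, which is negligible compared to $\ln(U_{np}^{-1})$; hence $x_n^2\le 2\ln(U_{np}^{-1})$ for all large $n$ and Theorem~\ref{ModerateD} applies.

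\textbf{Gaussian tail asymptotic.} For $\varepsilon>-1$, adding (\ref{Eq:Th21a}) and (\ref{Eq:Th21b}) gives $\P(|S_n|\ge x_n\sigma_n)=2(1-\Phi(x_n))(1+o(1))$. Since $x_n^2/2=(1+\varepsilon)^2\ln\Psi(n)$, Mill's ratio yields
$$
\P(|S_n|\ge x_n\sigma_n)\sim\frac{2}{(1+\varepsilon)\sqrt{2\pi}\sqrt{2\ln\Psi(n)}}\,\Psi(n)^{-(1+\varepsilon)^2}\propto\frac{\Psi(n)^{-(1+\varepsilon)^2}}{\sqrt{\ln\Psi(n)}},
$$
which is the first statement of the theorem. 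For $\varepsilon\le -1$ this asymptotic fails (the left-hand side is identically $\sim 1$), but this range is handled trivially below.

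\textbf{Series analysis and main obstacle.} For $\varepsilon\le -1$, the event in $S_\Psi$ holds almost surely, so $S_\Psi=\sum_n(nh(n))^{-1}=\infty$ by the integral test applied to the hypothesis $\int_c^\infty(sh(s))^{-1}ds=\infty$. For $\varepsilon>-1$, insert the Gaussian asymptotic into $S_\Psi$; the summands are eventually monotone and the integral test gives
$$
S_\Psi\asymp\int_m^\infty\frac{\Psi(t)^{-(1+\varepsilon)^2}}{t\,h(t)\,\sqrt{\ln\Psi(t)}}\,dt.
$$
The substitution $u=\Psi(t)$, $du=(th(t))^{-1}dt$, reduces this to $\int_1^\infty u^{-(1+\varepsilon)^2}(\ln u)^{-1/2}\,du$, which is finite iff $(1+\varepsilon)^2>1$, i.e. (in the range $\varepsilon>-1$) iff $\varepsilon>0$. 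This yields both halves of the series dichotomy. The only non-routine step is the lower bound on $\sigma_n^2=D_{n2}$ in the long-range case, since Lemma~\ref{Dorder} only provides an upper bound; one must revisit its proof and extract a matching $\Omega$-type estimate from the non-degeneracy of $b(\cdot,\cdot)$ on the unit circle. Everything else is standard bookkeeping behind the classical Davis--Gut law.
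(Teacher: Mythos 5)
Your overall route is the same as the paper's (verify the admissibility range for $x_n=(1+\varepsilon)\sqrt{2\ln\Psi(n)}$, apply Theorem \ref{ModerateD} to both tails, use Mill's ratio, then an integral test via $\Psi'(n)=1/(nh(n))$), and your short-memory verification via $U_{np}\le\rho_n^{p-2}$ and $\rho_n\le\|a\|_1/\sigma_n$ with $\sigma_n\sim |a|n$ is fine. But there is a genuine gap in the long-memory branch, and it sits exactly at the step you yourself flag as ``non-routine'' and then do not carry out: the lower bound $\sigma_n^2=D_{n2}\gg n^{6-2\beta}L^2(n)$. Without it you cannot conclude $U_{np}=O(n^{2-p})$, hence cannot verify $x_n^2\le 2\ln(U_{np}^{-1})$, and the entire chain (moderate deviation, Gaussian asymptotic, series dichotomy) is unsupported under \eqref{la}. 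Your suggestion that a matching $\Omega$-estimate can be ``extracted by revisiting the proof of Lemma \ref{Dorder}'' does not work as stated: that proof uses only $|b(\cdot,\cdot)|\le C$ and triangle-inequality upper bounds on $|b_{n,r,s}|$, so it says nothing about cancellation in $b_{n,r,s}=\sum_{(j,k)\in\Gamma_n}a_{j+r,k+s}$ when $b$ changes sign, which is precisely what a lower bound on $\sum_{r,s}b_{n,r,s}^2$ must control. Note also that condition \eqref{la} contains no explicit ``non-degeneracy'' hypothesis on $b$ for you to invoke. The paper closes this step differently: it quotes Theorem 2 of Surgailis (1982), which gives the exact asymptotic $\sigma_n^2\sim c_\beta\, n^{6-2\beta}L^2(n)$, and combines it with Lemma \ref{Dorder} to get $\ln(U_{np}^{-1})\ge (p-2)\ln n$. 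Either cite that result or supply a genuine lower-bound argument; as written, the long-memory case is not proved.

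Two smaller points. First, Theorem \ref{ModerateD} also needs $\rho_n^2\to 0$; in the long-memory case you never address this, though it would follow for free from your (missing) bound via $\rho_n^p\le U_{np}$, or, as the paper does, from Remark \ref{remark1} (rectangular $\Gamma_n$, $\|a\|_2<\infty$, $\sigma_n\to\infty$); make one of these explicit. Second, your separate treatment of $\varepsilon\le-1$ (where the stated proportionality cannot hold and divergence of $S_\Psi$ is trivial) is a reasonable reading of a case the paper silently ignores, and is not a defect.
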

\begin{proof}
First we consider the short memory case (\ref{con1}). Recall that $a=\sum_{r,s\in\mathbb{Z}}
a_{r,s}\ne 0$. Under condition (\ref{con1}), since 
$$
\sigma_n^2=\sum_{r,s\in\mathbb{Z}} b_{n,r,s}^2 =\sum_{r,s\in\mathbb{Z}}
\bigg(\sum_{j=0}^n\sum_{k=0}^n a_{j+r, k+s}\bigg)^2, 
$$
it is easy to see that $\sigma_n^2/n^2-a^2\rightarrow 0$. Hence $a^2n^2/\sigma_n^2\rightarrow 1$ as $n\rightarrow 
\infty$. Also the numbers $b_{n,r,s}$ that satisfy $|b_{n,r,s}|\ge 1$ are 
at most $\lceil a^2n^2\rceil$ asymptotically. Then
\begin{align*}
D_{np}&=\sum_{r,s\in\mathbb{Z}} |b_{n,r,s}|^p=\sum_{r,s\in\mathbb{Z},|b_{n,r,s}|\ge 1}
|b_{n,r,s}|^p
+\sum_{r,s\in\mathbb{Z},|b_{n,r,s}|< 1} |b_{n,r,s}|^p\notag\\
&\le \sum_{r,s\in\mathbb{Z},|b_{n,r,s}|\ge 1} A^p+\sum_{r,s\in\mathbb{Z},|b_{n,r,s}|< 1} b_{n,r,s}^2\notag\\
&\le \lceil a^2n^2\rceil A^p+\sigma_n^2
\end{align*}
has order $O(n^2)$ for $p>2$. Therefore $\ln(U_{np}^{-1})=\ln( \sigma_n^p/D_{np})
\ge (p-2)\ln n$.

Next we study the long memory case. Under condition (\ref{la}), Lemma \ref{Dorder} gives 
$D_{np}=O(n^{p(2-\beta)+2}L^p(n))$. On the other hand, by Theorem 2 of Surgailis (1982), 
\begin{align*}
\sigma_n^2=\sum_{r,s\in\mathbb{Z}} b_{n,r,s}^2=c_\beta n^{6-2\beta}L^2(n)
\end{align*}
for some constant $c_\beta$ depending only on $\beta$. Hence we also have 
$\ln \big(U_{np}^{-1}\big)=\ln \big(\sigma_n^p/D_{np}\big)
\ge (p-2)\ln n$. 

By the definition of $\Psi(t)$,  $\Psi(n) \le \int_c^n (sh(c))^{-1}ds\le\ln n/h(c)$. Let  
$x_n=(1+\varepsilon)\sqrt{2\ln\Psi(n)}$. Then  $x_n^2\ll2(p-2)\ln n\le 2\ln(U_{np}^{-1})$. 
By Remark \ref{remark1}, $\sigma_n\rightarrow\infty$ implies that
 $\rho_n\rightarrow 0$ in our case.
 Then by Theorem \ref{ModerateD},
 \begin{align}
&\mathbb P \left(|S_n|>(1+\varepsilon)\sigma_n\sqrt{2\ln\Psi (n)} \right)\notag\\
&=2 \left(1-\Phi(x_n)\right)\left(1+o(1)\right)\notag\\
&=2(2\pi)^{-1/2}\frac{1}{(1+\varepsilon)\sqrt{2\ln\Psi (n)}}
\exp \left(-(1+\varepsilon)^2
\ln\Psi (n)\right)(1+o(1))\label{approx}\\
&\propto \frac{1} {\sqrt{\ln\Psi (n)}} \big(\Psi (n) \big)^{-(1+\varepsilon)^2}.\notag
\end{align}
In (\ref{approx}) we have used the well-known inequality
$$
\frac{1}{(2\pi)^{1/2} (1+x)} \exp\Big(-\frac{x^{2}} 2 \Big)\le 1-\Phi(x)\le \frac 1 {(2\pi)^{1/2}
x}\exp\Big(- \frac{x^2} 2\Big), \ \hbox{ for } \, x > 1.
$$
Therefore,
\begin{align*}
S_{\Psi}&=\sum_{n=m}^\infty\frac{1}{nh(n)} \mathbb P \left(|S_n|>(1+\varepsilon)
\sigma_n\sqrt{2\ln\Psi(n)}\right)\notag\\
&\propto \sum_{n=m}^\infty\frac{1}{nh(n)\sqrt{\ln\Psi(n)}
\Psi(n)^{(1+\varepsilon)^2}}\notag\\
&=\sum_{n=m}^\infty\frac{\Psi'(n)}{\sqrt{\ln\Psi(n)}
\Psi(n)^{(1+\varepsilon)^2}}.\notag
\end{align*}
It is clear that $S_{\Psi}<\infty$ if $\varepsilon>0$ and $S_{\Psi}=\infty$ 
if $\varepsilon\le 0$.
\end{proof}

\begin{corollary}\label{cor1}
Assume condition (DG). Let
\begin{align*}
S=\sum_{n=3}^\infty\frac{1}{n(\ln\ln n)^b}
\mathbb P \left(|S_n|>(1+\varepsilon)\sigma_n\sqrt{2\ln\ln n}\right).
\end{align*}
Then for any $b\in\mathbb{R}$,
$S<\infty$ if $\varepsilon>0$ and $S=\infty$ if $\varepsilon<0$.
If $\varepsilon=0$,  $S<\infty$ if $b>\frac{1}{2}$ and $S=\infty$ if 
$b\le\frac{1}{2}$.
\end{corollary}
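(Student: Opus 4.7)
The plan is to leverage the precise moderate-deviation asymptotic of Theorem \ref{ModerateD} directly with the choice $x_n=(1+\varepsilon)\sqrt{2\ln\ln n}$, rather than invoke Theorem \ref{DG} as a black box; the weight $1/[n(\ln\ln n)^b]$ appearing in $S$ does not match the weight $1/[nh(n)]$ in $S_\Psi$ once one tries to enforce $\Psi(n)\sim\ln n$ (which would require $h\equiv$ const), so Theorem \ref{DG} is not directly substitutable. Instead, I would reuse the one-line computation leading to (\ref{approx}) and then reduce the problem to a Bertrand series.

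First I would verify the admissibility range $x_n^2\le 2\ln(U_{np}^{-1})$ required by Theorem \ref{ModerateD}. This has essentially already been done inside the proof of Theorem \ref{DG}: under condition (DG), in both the short-memory case (\ref{con1}) and the long-memory case (\ref{la}), one has $\ln(U_{np}^{-1})\ge (p-2)\ln n + O(1)$ for some $p>2$. Since $x_n^2=2(1+\varepsilon)^2\ln\ln n=o(\ln n)$, the range condition holds for all sufficiently large $n$ (and for $\varepsilon\le -1$ the event $\{|S_n|>(1+\varepsilon)\sigma_n\sqrt{2\ln\ln n}\}$ is trivially of probability $1$ for large $n$, so $S=\infty$ directly, in agreement with the claim).

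Next, combining Theorem \ref{ModerateD} with the standard Mills-ratio asymptotic $1-\Phi(x)\sim (x\sqrt{2\pi})^{-1}e^{-x^2/2}$, exactly the computation performed in (\ref{approx}), I would obtain
\begin{equation*}
\P\big(|S_n|>(1+\varepsilon)\sigma_n\sqrt{2\ln\ln n}\big)
\;\propto\; \frac{1}{\sqrt{\ln\ln n}}\,(\ln n)^{-(1+\varepsilon)^2},
\end{equation*}
so that
\begin{equation*}
S \;\propto\; \sum_{n=3}^\infty \frac{1}{n\,(\ln n)^{(1+\varepsilon)^2}\,(\ln\ln n)^{b+1/2}}.
\end{equation*}

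Finally I would invoke the Bertrand-series criterion: $\sum_{n\ge N} n^{-1}(\ln n)^{-\alpha}(\ln\ln n)^{-\gamma}$ converges if and only if either $\alpha>1$, or $\alpha=1$ and $\gamma>1$. Taking $\alpha=(1+\varepsilon)^2$ and $\gamma=b+1/2$, this yields $S<\infty$ whenever $\varepsilon>0$ (any $b$), $S=\infty$ whenever $\varepsilon<0$ (any $b$), and in the critical case $\varepsilon=0$ convergence iff $b+\tfrac12>1$, i.e.\ $b>\tfrac12$, matching the three cases of the corollary. No step presents a real obstacle: everything analytically substantial is already contained in Theorems \ref{ModerateD} and \ref{DG}, and what remains is bookkeeping -- confirming the admissible range via the $(p-2)\ln n$ lower bound and applying a routine integral test.
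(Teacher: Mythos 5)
Your proof is correct and follows essentially the same route as the paper: the paper's own proof simply applies the first display of Theorem \ref{DG} with $h\equiv 1$, $c=1$ (so $\Psi(n)=\ln n$), which is exactly the asymptotic $\P\big(|S_n|>(1+\varepsilon)\sigma_n\sqrt{2\ln\ln n}\big)\propto (\ln\ln n)^{-1/2}(\ln n)^{-(1+\varepsilon)^2}$ that you re-derive from Theorem \ref{ModerateD}, the admissibility bound $\ln(U_{np}^{-1})\ge (p-2)\ln n$, and the Mills ratio, followed by the same Bertrand-series comparison in the three cases. Your only deviation is cosmetic: the weight mismatch you point out affects only the $S_{\Psi}$ conclusion of Theorem \ref{DG}, not its probability asymptotic, which is directly substitutable and is precisely what the paper uses; your explicit handling of $\varepsilon\le -1$ is a harmless (indeed slightly more careful) addition.
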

\begin{proof}
Let $h(t)=1$ and $c=1$.  Then $\Psi(n)=\ln n$.
By Theorem \ref{DG}, for $n\ge 3$,
\begin{align}
\mathbb P\left(|S_n|>(1+\varepsilon)\sigma_n\sqrt{2\ln\ln n}\right)
\propto \frac{1}{\sqrt{\ln\ln n}}(\ln n)^{-(1+\varepsilon)^2}.\notag
\end{align}
For any $b\in \mathbb{R}$,
\begin{align}
S&=\sum_{n=3}^\infty\frac{1}{n(\ln\ln n)^b}\mathbb P \left(|S_n|>(1+\varepsilon)
\sigma_n\sqrt{2\ln\ln n}\right)\notag\\
&\propto \sum_{n=3}^\infty\frac{1}{n(\ln\ln n)^{b+1/2}}
(\ln n)^{-(1+\varepsilon)^2}.\label{ep0}
\end{align}
It is clear that $S<\infty$ if $\varepsilon>0$ and $S=\infty$ 
if $\varepsilon<0$. In the case $\varepsilon=0$, by (\ref{ep0}), it is easy to see that 
$S<\infty$ if $b>\frac{1}{2}$ and $S=\infty$ if 
$b\le\frac{1}{2}$.
\end{proof}

\begin{corollary}\label{cor2}
Assume condition (DG).  For $0\le r<1$, let
\begin{align*}
S_r&=\sum_{n=3}^\infty\frac{1}{n(\ln n)^r}
\mathbb P\left(|S_n|>(1+\varepsilon)\sigma_n\sqrt{2(1-r)\ln\ln n}\right).
\end{align*}
Then
$S_r<\infty$ if $\varepsilon>0$ and $S_r=\infty$ if $\varepsilon\le 0$.
\end{corollary}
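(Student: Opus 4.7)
The plan is to mimic the proof of Corollary \ref{cor1} by applying Theorem \ref{ModerateD} directly to each probability in $S_r$ and then evaluating the resulting series.

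First I would reuse, from the proof of Theorem \ref{DG}, the fact that condition (DG) implies $\sigma_n\to\infty$ and $\rho_n^2\to 0$ (via Remark \ref{remark1}), together with the bound $\ln(U_{np}^{-1})\ge (p-2)\ln n$ for some $p>2$ and all large $n$. Set $x_n := (1+\varepsilon)\sqrt{2(1-r)\ln\ln n}$, so that $x_n^2 = O(\ln\ln n) = o(\ln n)$ and hence the growth constraint $x_n^2 \le 2\ln(U_{np}^{-1})$ of Theorem \ref{ModerateD} is eventually satisfied. When $\varepsilon > -1$, we have $x_n\to+\infty$, and applying Theorem \ref{ModerateD} to both tails together with the Mills-ratio asymptotic $1-\Phi(x)\sim (x\sqrt{2\pi})^{-1}e^{-x^2/2}$ (already used in the proof of Theorem \ref{DG}) yields
\begin{equation*}
\mathbb{P}\big(|S_n|>(1+\varepsilon)\sigma_n\sqrt{2(1-r)\ln\ln n}\big)\;\propto\;\frac{(\ln n)^{-(1+\varepsilon)^2(1-r)}}{\sqrt{\ln\ln n}}.
\end{equation*}
Substituting into $S_r$ reduces it, up to a positive multiplicative constant, to
\begin{equation*}
\sum_{n\ge 3}\frac{1}{n\,(\ln n)^{f(\varepsilon)}\,\sqrt{\ln\ln n}},\qquad f(\varepsilon):=r+(1+\varepsilon)^2(1-r).
\end{equation*}

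Convergence is then a routine integral-test check, since $f(\varepsilon)>1$ iff $(1+\varepsilon)^2>1$. For $\varepsilon>0$ this gives $f(\varepsilon)>1$ and convergence; for $-1<\varepsilon<0$ we have $f(\varepsilon)<1$ and divergence; at the borderline $\varepsilon=0$, $f(0)=1$ and the remaining factor $(\ln\ln n)^{-1/2}$ is not enough to make $\sum 1/(n\ln n\,\sqrt{\ln\ln n})$ converge (substitute $u=\ln\ln n$). Finally, for the degenerate range $\varepsilon\le-1$ the threshold $(1+\varepsilon)\sigma_n\sqrt{2(1-r)\ln\ln n}$ is nonpositive, so by the central limit theorem $\mathbb{P}(|S_n|>(1+\varepsilon)\sigma_n\sqrt{2(1-r)\ln\ln n})\to 1$, and $S_r$ dominates $\frac{1}{2}\sum 1/(n(\ln n)^r)=\infty$ (using $r<1$).

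No step is a substantial obstacle; the argument closely parallels the proof of Corollary \ref{cor1}, with the only bookkeeping being the tracking of the exponent $f(\varepsilon)$ and the separate handling of the degenerate case $\varepsilon\le -1$, where the moderate-deviation asymptotic is replaced by a trivial lower bound.
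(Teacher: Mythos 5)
Your proof is correct, and in substance it is the same argument as the paper's: the heart of both is the moderate deviation estimate of Theorem \ref{ModerateD} combined with the Mills-ratio asymptotic to get $\mathbb P\big(|S_n|>(1+\varepsilon)\sigma_n\sqrt{2(1-r)\ln\ln n}\big)\propto (\ln n)^{-(1+\varepsilon)^2(1-r)}/\sqrt{\ln\ln n}$, followed by the same series comparison with exponent $r+(1+\varepsilon)^2(1-r)=1+(2\varepsilon+\varepsilon^2)(1-r)$. The only packaging difference is that the paper obtains the corollary as a one-line specialization of Theorem \ref{DG}, taking $h(t)=(\ln t)^r/(1-r)$ and $c=1$ so that $\Psi(n)=(\ln n)^{1-r}$, whereas you inline the argument by applying Theorem \ref{ModerateD} directly with $x_n=(1+\varepsilon)\sqrt{2(1-r)\ln\ln n}$ and reusing the bound $\ln(U_{np}^{-1})\ge(p-2)\ln n$ from the proof of Theorem \ref{DG}; the paper's route is shorter, yours is self-contained. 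One point where your write-up is actually more careful than the paper: you treat the range $\varepsilon\le-1$ separately with a trivial lower bound, which is needed because the $\propto$ asymptotic (via the Mills ratio) only applies when $x_n\to+\infty$, i.e.\ $\varepsilon>-1$; the paper's corollary proof silently relies on Theorem \ref{DG} for all real $\varepsilon$ without this distinction.
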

\begin{proof}
Let $h(t)=(\ln t)^r/(1-r)$, $c=1$. Then $\Psi(n)=(\ln n)^{1-r}$.
By Theorem \ref{DG},    for $n\ge 3$,
\begin{align}
\mathbb P\left(|S_n|>(1+\varepsilon)\sigma_n\sqrt{2(1-r)\ln\ln n}\right)
\propto \frac{1}{\sqrt{\ln\ln n}}(\ln n)^{-(1+\varepsilon)^2(1-r)
}\notag
\end{align}
and
\begin{align*}
S_r&=\sum_{n=3}^\infty\frac{1}{n(\ln n)^r}\mathbb P\left(|S_n|>
(1+\varepsilon)\sigma_n\sqrt{2(1-r)\ln\ln n}\right)\notag\\
&\propto \sum_{n=3}^\infty\frac{1}{n\sqrt{\ln\ln n}(\ln n)^{
(1+\varepsilon)^2(1-r)+r}}\notag\\
&=\sum_{n=3}^\infty\frac{1}{n\sqrt{\ln\ln n}(\ln n)^{1+(2\varepsilon+\varepsilon^2)
(1-r)}}.\notag
\end{align*}
It is clear that $S_r<\infty$ if $\varepsilon>0$ and $S_r=\infty$ 
if $\varepsilon\le 0$.
\end{proof}

\begin{corollary}\label{cor3}
Assume condition (DG). Let
\begin{align*}
S=\sum_{n=16}^\infty\frac{1}{n\ln n} \mathbb P\left(|S_n|>(1+\varepsilon)
\sigma_n\sqrt{2\ln\ln\ln n}\right).
\end{align*}
Then $S<\infty$ if $\varepsilon>0$ and $S=\infty$ if $\varepsilon\le 0$.
\end{corollary}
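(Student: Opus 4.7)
The plan is to specialize Theorem \ref{DG} to the choice $h(t) = \ln t$ on $[c,\infty)$, mimicking the templates used in Corollaries \ref{cor1} and \ref{cor2}. Taking $c = e$ (any $c > 1$ works, up to a harmless additive constant absorbed by $\propto$), I would compute $\Psi(t) = \int_e^t 1/(s\ln s)\,ds = \ln\ln t$ and verify the two hypotheses: $h(t) = \ln t$ is positive and nondecreasing on $[e,\infty)$, and $\int_e^\infty 1/(t\ln t)\,dt = \ln\ln t\big|_e^\infty = \infty$. The threshold $m$ is determined by $\Psi(n) > 1$, i.e., $\ln\ln n > 1$, i.e., $n > e^e \approx 15.154$, giving $m = 16$. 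This matches precisely the lower index of the sum defining $S$, which is a reassuring sanity check on the choice of $h$.

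Next I would invoke Theorem \ref{DG} directly to get, for every $n \ge 16$,
\[
\mathbb{P}\bigl(|S_n| > (1+\varepsilon)\sigma_n\sqrt{2\ln\ln\ln n}\bigr) \propto \frac{1}{\sqrt{\ln\ln\ln n}}(\ln\ln n)^{-(1+\varepsilon)^2},
\]
and substitute into the definition of $S$ to reduce the problem to the convergence or divergence of
\[
\sum_{n=16}^\infty \frac{1}{n\ln n\,\sqrt{\ln\ln\ln n}\,(\ln\ln n)^{(1+\varepsilon)^2}}.
\]

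The final step is an integral test with the substitution $u = \ln\ln n$, under which $du = dn/(n\ln n)$ and $\ln\ln\ln n = \ln u$. The tail then becomes
\[
\int^\infty \frac{du}{\sqrt{\ln u}\,u^{(1+\varepsilon)^2}}.
\]
For $\varepsilon > 0$ we have $(1+\varepsilon)^2 > 1$, so the integral converges. For $\varepsilon = 0$ a further substitution $v = \ln u$ reduces the tail to $\int dv/\sqrt{v}$, which diverges. For $-1 < \varepsilon < 0$ the exponent $(1+\varepsilon)^2 < 1$ overwhelms the slowly varying factor $\sqrt{\ln u}$, so divergence persists; for $\varepsilon \le -1$ the event inside the probability is certain, and $\sum_n 1/(n\ln n) = \infty$ directly by the same integral test. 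Combining these four cases gives the claimed dichotomy.

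I do not anticipate any serious obstacle: the argument is entirely parallel to the proofs of Corollaries \ref{cor1} and \ref{cor2}, and the only mild points requiring care are the identification of $m = 16$ (which is what pins down the constant $c$) and the separate treatment of $-1 < \varepsilon < 0$ versus $\varepsilon \le -1$ in the final integral test.
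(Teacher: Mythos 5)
Your proposal is correct and follows essentially the same route as the paper: choose $h(t)=\ln t$, $c=e$, so that $\Psi(n)=\ln\ln n$ and $m=16$, apply Theorem \ref{DG} to get the asymptotic $\frac{1}{\sqrt{\ln\ln\ln n}}(\ln\ln n)^{-(1+\varepsilon)^2}$, and conclude from the resulting series $\sum_n \frac{1}{n\ln n\,(\ln\ln n)^{(1+\varepsilon)^2}\sqrt{\ln\ln\ln n}}$. Your explicit integral test (with the separate, careful treatment of $-1<\varepsilon<0$ and $\varepsilon\le -1$) merely fills in the convergence/divergence step the paper declares ``clear,'' so there is no substantive difference.
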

\begin{proof}
Let $h(t)=\ln t$, $c=e$. Then $\Psi(n)=\ln\ln n$. By Theorem \ref{DG},    
for $n\ge 16$,
\begin{align}
\mathbb P\left(|S_n|>(1+\varepsilon)\sigma_n\sqrt{2\ln\ln\ln n}\right)
\propto \frac{1}{\sqrt{\ln\ln\ln n}}(\ln\ln n)^{-(1+\varepsilon)^2}\notag
\end{align}
and
\begin{align*}
S&=\sum_{n=16}^\infty\frac{1}{n\ln n}\mathbb P\left(|S_n|>(1+\varepsilon)
\sigma_n\sqrt{2\ln\ln\ln n}\right)\notag\\
&\propto \sum_{n=16}^\infty\frac{1}{n\ln n(\ln\ln n)^{(1+\varepsilon)^2}
\sqrt{\ln\ln\ln n}}.\notag
\end{align*}
It is clear that $S<\infty$ if $\varepsilon>0$ and $S=\infty$ if $\varepsilon\le 0$.
\end{proof}
\begin{remark}
One can also prove the Davis-Gut law for linear processes by applying the
moderate deviation results for linear processes in Peligrad et al. (2014a).  
Let $S_{n}=\sum_{k=1}^{n}X_{k}$,
where
\begin{equation*}
X_{k}=\sum_{j=-\infty}^{\infty}a_{k-j}\xi_{j} \label{ln}%
\end{equation*}
and the innovations $\xi_j$ are i.i.d. random variables with $\mathbb{E}\xi_j=0$ 
and $\mathbb{E}\xi_j^2=1$. Consider the short memory case $\sum_{i= 
-\infty}^\infty |a_i|<\infty, a=\sum_{i=-\infty}^\infty a_i\ne 0$.
Observe that $S_{n}=\sum_{i=-\infty}^{\infty}b_{ni}\xi_{i}$ where
$b_{ni}=a_{1-i}+\cdots+a_{n-i}$.  Then $\sigma_n^2={\rm Var}(S_n)=
\sum_{i}b_{ni}^{2}$. For the short memory case,
it is well known that $\sigma_n^2$ has order $n$. Furthermore,  
$a^2n/\sigma_n^2\rightarrow 1$ as $n\rightarrow\infty$; $\sum_i |b_{ni}|^p$ 
has order $n$ for $p>2$. Let
\begin{equation*}
U_{np}=\bigg(\sum_{i}b_{ni}^{2}\bigg)^{-p/2}\sum_{i}|b_{ni}|^{p}. 
\end{equation*}
Then $\ln(U_{np}^{-1})=\ln[(\sum_{i}b_{ni}^{2})^{p/2}/\sum_{i}|b_{ni}|^{p}]
\sim \frac{1}{2}(p-2)\ln n$.
Let $h(t)$ and $\Psi(t)$ be the functions defined as in Theorem \ref{DG}. 
Hence
\begin{align}\label{x_n}
x_n=(1+\varepsilon)\sqrt{2\ln\Psi(n)}
\ll \sqrt{(p-2)\ln n}\sim \sqrt{2\ln(U_{np}^{-1})}.
\end{align}
Then by part (iii) of Corollary 3 in  Peligrad et al. (2014a),  the Davis-Gut type laws,
Theorem \ref{DG} and Corollary \ref{cor1}, \ref{cor2}, \ref{cor3},  hold for short
memory linear processes. 
\end{remark}

\begin{remark}
For the causal long memory linear process, $X_{k}=\sum_{j=0}^{\infty}
a_{k-j}\xi_{j}$, with $\sum_{i=0}^\infty |a_i|=\infty$,
$\sum_{i=0}^\infty a_i^2<\infty$,  we assume that $a_{n}=(n+1)^{-\alpha}L(n+1)$,
where $1/2<\alpha<1$, and $L(n)>0$ is a slowly varying function at infinity.
Then $S_n=\sum_{i=1}^\infty b_{ni}\xi_{n-i}$ where $b_{ni}=\sum_{k=1}^i a_k$
for $i<n$ and $b_{ni}=\sum_{k=i-n+1}^i a_i$ for $i\ge n$. Also
\begin{equation} \label{sumb}
\sigma_n^2= {\rm Var}(S_n)=\sum_{i=1}^{\infty}b_{ni}^{2}\sim c_\alpha n^{3-2\alpha}
L^{2}(n),
\end{equation}
where
\begin{equation*}
c_{\alpha}= \frac 1 {(1-\alpha)^{2}} \int_{0}^{\infty}[x^{1-\alpha}-
\max(x-1,0)^{1-\alpha}%
]^{2}dx. \label{defcalpha}%
\end{equation*}

The asymptotic equivalence in (\ref{sumb}) is well known. See for instance
Theorem 2 in Wu and Min (2005). On the other hand, there are
constants $C_{1}$ and $C_{2}$ such that for all $n\geq1,$%
\begin{equation}
b_{ni}\leq C_{1}i^{1-\alpha}L(i)\text{ for }i\leq2n\text{ and }%
b_{ni}\leq C_{2}n(i-n)^{-\alpha}L(i)\text{ for }i>2n.\notag
\end{equation}
Hence, by the properties of slowly varying functions as stated in Lemma \ref{Karamata},
\begin{align*}
\sum_i b_{ni}^p&\ll\sum_{i<2n}i^{(1-\alpha)p}L^p(i)+ \sum_{i\ge 2n}n^p(i-n)^{-\alpha p}
L^p(i)\notag\\
&\ll n^{1+p(1-\alpha)}L^p(n).
\end{align*}
Therefore,
\begin{align*}
\ln(U_{np}^{-1})&=\ln \bigg[(\sum_{i}b_{ni}^{2})^{p/2}/\sum_{i}b_{ni}^{p}\bigg]\notag\\
&\gg\ln[n^{p(3-2\alpha)/2}L^{p}(n)/ (n^{1+p(1-\alpha)}L^p(n))]\notag\\
&\sim \frac{1}{2}(p-2)\ln n.
\end{align*}
Let $h(t)$ and $\Psi(t)$ be the functions defined as in Theorem \ref{DG}. Hence (\ref{x_n}) still holds. 
Then by Corollary 3, part (iii) of Peligrad et al. (2014a),
the Davis-Gut type laws,
Theorem \ref{DG} and Corollary \ref{cor1}, \ref{cor2}, \ref{cor3},  hold for
long memory linear processes.

\end{remark}

\section{Appendix}
In the Appendix, we first justify (1) and (2) in the Introduction, and then collect some results that are useful 
for the proofs in Section 2.

\begin{lemma}\label{Lem:Conv}
Let $\{\xi_{r,s}, (r, s) \in \mathbb{Z}^2\}$ and
$\xi_0$ be i.i.d. random variables with $\mathbb{E}\xi_0=0$
and $\mathbb{E}\xi^2_0=1$, and let $\{a_{r,s}, (r, s) \in \mathbb{Z}^2\}$ is a square summable sequence
of constants. Then the following statement hold:
\begin{itemize}
\item[(i).] The series $\sum_{r, s\in \mathbb{Z}} a_{r,s}\xi_{j-r, k-s} $ converges in $L^2(\Omega, \mathbb P)$ 
and almost surely.
\item[(ii).] Equation (2) holds in $L^2(\Omega, \mathbb P)$ and almost surely.
\end{itemize}
\end{lemma}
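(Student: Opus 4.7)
The plan is to reduce both parts to Kolmogorov's convergence theorem for series of centered independent random variables with summable variances. For (i), fix $(j,k)\in\mathbb{Z}^2$ and any enumeration $(r_\ell,s_\ell)_{\ell\ge 1}$ of $\mathbb{Z}^2$. The random variables $a_{r_\ell,s_\ell}\xi_{j-r_\ell,k-s_\ell}$ are independent, centered, with variances $a_{r_\ell,s_\ell}^2$ summing to $\|a\|_2^2<\infty$. By orthogonality the partial sums are Cauchy in $L^2$ and therefore converge in $L^2$; Kolmogorov's theorem yields almost sure convergence. The limit does not depend on the enumeration, because any two enumerations produce the same $L^2$ limit, and an a.s.\ convergent sequence that also converges in $L^2$ must agree with its $L^2$ limit a.s. Equivalently, for any sequence of finite sets $F_N\uparrow\mathbb{Z}^2$, the partial sums $\sum_{(r,s)\in F_N}a_{r,s}\xi_{j-r,k-s}$ converge in $L^2$ and a.s.\ to the same $X_{j,k}$.

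For (ii), first re-index each series for $X_{j,k}$ by the innovation: the substitution $(r,s)\mapsto(j-r,k-s)$ is a bijection of $\mathbb{Z}^2$, so by (i),
\[
X_{j,k}=\sum_{r,s\in\mathbb{Z}} a_{j-r,k-s}\,\xi_{r,s}\quad\text{in }L^2\text{ and a.s.}
\]
For any finite $F\subset\mathbb{Z}^2$ set $S_n^{(F)}:=\sum_{(j,k)\in\Gamma_n}\sum_{(r,s)\in F}a_{j-r,k-s}\xi_{r,s}$. Since both $\Gamma_n$ and $F$ are finite, the two sums commute, giving
\[
S_n^{(F)}=\sum_{(r,s)\in F} c_{n,r,s}\,\xi_{r,s},\qquad c_{n,r,s}:=\sum_{(j,k)\in\Gamma_n} a_{j-r,k-s}.
\]
Cauchy--Schwarz gives $c_{n,r,s}^2\le|\Gamma_n|\sum_{(j,k)\in\Gamma_n}a_{j-r,k-s}^2$; summing over $(r,s)$ and using Fubini yields $\sum_{r,s}c_{n,r,s}^2\le|\Gamma_n|^2\|a\|_2^2<\infty$. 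Applying the argument of (i) to the square-summable coefficients $\{c_{n,r,s}\}$ shows that $\sum_{r,s}c_{n,r,s}\xi_{r,s}$ converges in $L^2$ and a.s.

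Now let $F\uparrow\mathbb{Z}^2$ along an exhausting sequence of finite sets. The left-hand side $S_n^{(F)}$ is a finite linear combination (indexed by $\Gamma_n$) of sequences converging in $L^2$ and a.s., so it converges in both senses to $\sum_{(j,k)\in\Gamma_n}X_{j,k}=S_n$; the right-hand side converges in both senses to $\sum_{r,s}c_{n,r,s}\xi_{r,s}$. Hence $S_n=\sum_{r,s}c_{n,r,s}\xi_{r,s}$, and relabeling $(r,s)\mapsto(-r,-s)$ gives equation~(2) with $b_{n,r,s}=c_{n,-r,-s}=\sum_{(j,k)\in\Gamma_n}a_{j+r,k+s}$. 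The only real subtlety is the unconditional character of the convergence in (i), which underwrites both the re-indexing step and the freedom to choose the exhausting sequence arbitrarily; this is handled by the standard interplay between Kolmogorov's theorem and $L^2$ convergence noted at the outset.
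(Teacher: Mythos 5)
Your proposal is correct and follows essentially the same route as the paper: $L^2$ convergence via summable variances of the independent centered summands, almost sure convergence from Kolmogorov's theorem (the paper cites the Three-Series/L\'evy equivalence instead, an immaterial difference), and part (ii) by exchanging the two sums over finite index sets and passing to the limit, with the Cauchy--Schwarz bound $\sum_{r,s}c_{n,r,s}^2\le|\Gamma_n|^2\|a\|_2^2$ ensuring the re-summed series converges. No gaps.
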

\begin{proof} We refer to the series $\sum_{r, s\in \mathbb{Z}} a_{r,s}\xi_{j-r, k-s} $ by (1). Let  $\{\Upsilon_n, n \ge 1\}$
be an arbitrary sequence  of finite subsets of $\mathbb Z^2$ that satisfy $\Upsilon_n \subset \Upsilon_{n+1}$ and 
$|\Upsilon_n| \to \infty$. Then for any $m < n$,
\[
{\mathbb E}\left[\bigg(\sum_{(r, s)\in \Upsilon_{n}\backslash \Upsilon_m} a_{r,s}\xi_{j-r, k-s} \bigg)^2\right] = 
\sum_{(r, s)\in \Upsilon_{n}\backslash \Upsilon_m} a_{r,s}^2,
\]
which tends to 0 as $ m \to \infty$. This implies that (1) converges in $L^2(\Omega, \mathbb P)$.

Since the summands in series (1) 
are independent random variables, the almost sure 
convergence of (1) follows from Kolmogorov's Three-Series Theorem. Alternatively,
it follows from L\'evy's Equivalence Theorem which says that the almost sure convergence is equivalent
to convergence in probability or in law. 

 
Next let $n\ge 1$ be fixed and we write the partial sum $S_n$ as 
\begin{align*}
S_n 
&=\sum_{(j,k)\in\Gamma_n}  \sum_{r, s\in \mathbb Z} a_{r,s}\xi_{j-r, k-s}\\
&= \sum_{(j,k)\in\Gamma_n}  \sum_{r, s\in \mathbb Z} a_{j+r,k+s}\xi_{-r, -s}\\
&= \sum_{(j,k)\in\Gamma_n}  \lim_{m \to \infty} \sum_{r, s\in [-m, m]} a_{j+r,k+s}\xi_{-r, -s}\\
 &=\lim_{m \to \infty} \sum_{(j,k)\in\Gamma_n} \sum_{r, s\in [-m, m]} a_{j+r,k+s}\xi_{-r, -s},
\end{align*}
where the limit is taken either in $L^2(\Omega, \mathbb)$ or in  the almost sure sense. 
Since both index sets $\Gamma_n$ and $[-m, m]^2$ are finite,
we change the order of summation to get
\begin{align*}
S_n &= \lim_{m \to \infty} \sum_{r, s\in [-m, m]} \bigg(\sum_{(j,k)\in\Gamma_n}  a_{j+r,k+s}\bigg)\xi_{-r, -s}\\
&=\sum_{r, s\in \mathbb{Z}}\bigg(\sum_{(j,k)\in\Gamma_n}  a_{j+r,k+s}\bigg)\xi_{-r, -s}
=\sum_{r,s\in\mathbb{Z}} b_{n,r,s}\xi_{-r, -s}.
\end{align*}
This verifies (ii).
\end{proof}

The following theorem is an extended version of the Fuk--Nagaev inequality (see
Corollary 1.7 in Nagaev (1979)) for a double sum of infinite many random variables. See also the extension of Fuk--Nagaev inequality for sum of infinite many random variables,  
Theorem 5.1 and Remark 5.1 in Peligrad et al. (2014b).

\begin{theorem}\label{FN}
Let  $(X_{ni})_{i\in \mathbb{N}^2}$ be a set of
independent random variables with mean 0.  For a constant $m \ge 2$, let $\beta=m/(m+2)$
and $\alpha=1-\beta=2/(m+2)$. For  any $y>0$, define $X_{ni}^{(y)}=X_{ni}I(X_{ni}\leq
y)$, $A_{n}(m;0,y):=\sum_{i\in\mathbb{N}^2}\mathbb{E}[X_{ni}^{m}I(0<X_{ni}<y)]$ and
$B_{n}^{2}(-\infty,y):=\sum_{i\in\mathbb{N}^2}\mathbb{E}[X_{ni}^{2}I(X_{ni}<y)].$
Then for any $x>0$ and $y>0$,
\begin{equation*}
\mathbb{P}\bigg(\sum_{i\in\mathbb{N}^2}X_{ni}^{(y)}\geq x \bigg)\leq\exp \bigg(-\frac{\alpha^{2}
x^{2}}{2e^{m}B_{n}^{2}(-\infty,y)} \bigg)+\bigg (\frac{A_{n}(m;0,y)}{\beta
xy^{m-1}}\bigg)^{\beta x/y}.
\end{equation*}
\end{theorem}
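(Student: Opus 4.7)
The plan is to reduce Theorem \ref{FN} to its one-dimensional counterpart, namely the infinite-sum Fuk--Nagaev inequality of Peligrad et al.\ (2014b, Theorem~5.1 and Remark~5.1). Since $\mathbb{N}^2$ is countable, I would fix any bijection $\phi: \mathbb{N} \to \mathbb{N}^2$ (e.g.\ the Cantor pairing by anti-diagonals) and relabel the family as a one-dimensional sequence $Y_{nk} := X_{n,\phi(k)}$ for $k \in \mathbb{N}$. Then $(Y_{nk})_{k \ge 1}$ is an independent family of mean-zero random variables, and its truncation $Y_{nk}^{(y)} = Y_{nk} I(Y_{nk} \le y)$ coincides with $X_{n,\phi(k)}^{(y)}$, so
\[
\sum_{i \in \mathbb{N}^2} X_{ni}^{(y)} = \sum_{k \in \mathbb{N}} Y_{nk}^{(y)}
\]
as an identity between random series whenever the right-hand side converges.

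Next I would observe that the two quantities appearing on the right-hand side of the asserted bound are sums of nonnegative terms, hence invariant under the relabeling:
\[
\sum_{k\in\mathbb{N}} \mathbb{E}[Y_{nk}^m I(0 < Y_{nk} < y)] = A_n(m;0,y), \qquad \sum_{k\in\mathbb{N}} \mathbb{E}[Y_{nk}^2 I(Y_{nk} < y)] = B_n^2(-\infty, y).
\]
Applying Theorem~5.1 together with Remark~5.1 of Peligrad et al.\ (2014b) to the one-dimensional sequence $(Y_{nk})_{k \ge 1}$ with the same $m$, $x$, $y$ then yields exactly the desired inequality, with the $\mathbb{N}^2$-indexed sums on the right interpreted via the enumeration.

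The only point requiring justification is the a.s.\ convergence of the rearranged series $\sum_k Y_{nk}^{(y)}$. If $B_n^2(-\infty, y) = +\infty$, then the exponential term on the right-hand side equals $1$ (interpreting $\alpha^2 x^2/(2 e^m \cdot \infty)$ as $0$) and the inequality is vacuous. Otherwise the truncated variables $Y_{nk}^{(y)}$ are bounded above by $y$, have variances summing to at most $B_n^2(-\infty, y) < \infty$, and their negative means satisfy $\sum_k |\mathbb{E}[Y_{nk}^{(y)}]| = \sum_k \mathbb{E}[Y_{nk} I(Y_{nk} > y)] \le y^{-1} \sum_k \mathbb{E}[Y_{nk}^2 I(Y_{nk} > y)]$, which is controlled by the same variance budget (absorbing the mass above $y$ into the corresponding Nagaev moment term). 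Kolmogorov's two-series theorem then gives almost sure convergence of the series; in every application of Theorem \ref{FN} in this paper, the series is in fact known to converge a priori via Lemma \ref{Lem:Conv}.

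There is essentially no substantive obstacle here: the main work has already been done by Peligrad et al.\ (2014b) in the one-dimensional infinite case. The passage from $\mathbb{N}$ to $\mathbb{N}^2$ is purely a relabeling, made possible because (i) the random variables are indexed by a countable set and are independent, and (ii) the moment sums on the right-hand side of the Fuk--Nagaev bound involve only nonnegative summands and are therefore invariant under any reordering. Consequently, the extension to double sums is a bookkeeping matter rather than a new probabilistic result, and the same argument extends immediately to index sets of the form $\mathbb{Z}^N$ for any $N \ge 1$.
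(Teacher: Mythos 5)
Your reduction by relabeling $\mathbb{N}^2$ through a bijection with $\mathbb{N}$ and invoking Theorem 5.1 and Remark 5.1 of Peligrad et al.\ (2014b) is essentially the paper's own treatment: the paper gives no separate argument, presenting the result as an immediate extension of the one-dimensional infinite-sum Fuk--Nagaev inequality (itself built on Nagaev 1979, Corollary 1.7), with the right-hand side quantities unaffected by the enumeration since their summands are nonnegative. So the proposal is correct and takes the same approach; the only loose point is your bound on $\sum_k \mathbb{E}[Y_{nk}I(Y_{nk}>y)]$ via the second moments above the truncation level, which is not controlled by $A_n$ or $B_n^2$ alone, but this is immaterial here because in every application in the paper the series converges unconditionally (e.g.\ in $L^2$, as noted via Lemma \ref{Lem:Conv}).
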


The following result is Theorem 5.2 of Peligrad et al. (2014b),
which is an immediate consequence of Theorem 1.1 in Frolov (2005).

\begin{theorem}
\label{frolov} Let $(X_{nj})_{1\leq j\leq k_{n}}$ be an array of row-wise
independent centered random variables. Let $S_{n}=\sum
_{j=1}^{k_{n}}X_{nj}$ and  $\sigma_n^2=\sum_{j=1}^{k_n}\mathbb{E}X_{nj}^2$.
For any positive numbers $u, v$ and $\varepsilon$, denote
\[
\Lambda_{n}(u,v,\varepsilon)=\frac{u}{\sigma_{n}^{2}}\sum_{j=1}^{k_{n}}
\mathbb{E} \big[X_{nj}^{2}I(X_{nj}\leq-\varepsilon\sigma_{n}/v)\big].
\]
Assume that for some constant  $p>2$,  $M_{np} =\sum_{j=1}^{k_{n}}\mathbb{E} \big[X_{nj}^{p}
I(X_{nj}\geq0)\big]<\infty$ and  $L_{np} :=\sigma_{n}^{-p}M_{np} \rightarrow 0$  as $n \to \infty$.
If  $\Lambda_{n}(x^{4},x^{5},\varepsilon)\rightarrow0$ for any $\varepsilon>0$ and 
$x^{2} -2\ln(L_{np}^{-1})-(p-1)\ln\ln(L_{np}^{-1})\rightarrow-\infty$ as $n \to \infty$, then
\[
\mathbb{P}\left(  S_{n}\geq x\sigma_{n}\right)  =(1-\Phi(x))(1+o(1)).
\]
\end{theorem}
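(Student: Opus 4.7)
The plan is to follow the classical exponential-tilting (Esscher transform) approach to moderate deviations, combined with a careful two-sided truncation that matches the truncation parameters appearing in $\Lambda_n$ and $M_{np}$. Since the statement is essentially a triangular-array version of a Cramér-type asymptotic under a one-sided $p$-th moment hypothesis, most of the work will be bookkeeping of error terms under the boundary growth rate $x^2 \le 2\ln(L_{np}^{-1}) + O(\ln\ln L_{np}^{-1})$, which marks the outer edge of the regime where the Gaussian approximation still holds.

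First I would decompose each summand into $X_{nj} = X_{nj}^{(1)} + X_{nj}^{(2)}$, where $X_{nj}^{(1)} = X_{nj}\mathbf{1}\{-\varepsilon\sigma_n/x^5 < X_{nj} \le T_n\}$ with $T_n$ of order $\sigma_n/x$. The negative cutoff is precisely what $\Lambda_n(x^4,x^5,\varepsilon)\to 0$ is designed to kill: it forces $\sum_j \mathbb{E}[X_{nj}^2 \mathbf{1}\{X_{nj}\le -\varepsilon\sigma_n/x^5\}] = o(\sigma_n^2/x^4)$, which makes replacing $\sigma_n^2$ by the truncated variance a harmless perturbation even in the tails. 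The positive cutoff $T_n$ peels off the single-large-jump contribution, whose probability I would estimate separately by a union bound and the hypothesis $M_{np} < \infty$.

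Second, I would apply Esscher's exponential tilt to $\widetilde S_n:=\sum_j X_{nj}^{(1)}$. Choose $h = h_n > 0$ by the saddlepoint equation $\sum_j \mathbb{E}[X_{nj}^{(1)} e^{hX_{nj}^{(1)}}]/\mathbb{E}[e^{hX_{nj}^{(1)}}] = x\sigma_n$. A quadratic expansion of the log-moment generating function gives $h \sim x/\sigma_n$, and the Laplace transform contributes $\exp(-x^2/2)(1+o(1))$ provided the third- and higher-order cumulant sums are controlled. Here the moment input $L_{np}\to 0$ is used via the bound $\sum_j \mathbb{E}|X_{nj}^{(1)}|^k \le T_n^{k-p}\, M_{np}$ for $k\ge p$, which turns a polynomial $p$-th moment into effective exponential control throughout the relevant range of $h$. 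A local Berry-Esseen / tilted local CLT then upgrades the Laplace asymptotics to the Mills ratio form $(x\sqrt{2\pi})^{-1}e^{-x^2/2}(1+o(1))$, i.e.\ $(1-\Phi(x))(1+o(1))$.

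The hardest step, and the reason the auxiliary growth condition is written exactly as $x^2 - 2\ln(L_{np}^{-1}) - (p-1)\ln\ln(L_{np}^{-1}) \to -\infty$, is showing that the one-big-jump remainder $\mathbb{P}(\exists j: X_{nj} > T_n,\, S_n \ge x\sigma_n)$ is negligible compared to $1-\Phi(x)$. The natural bound is $L_{np}\, x^{-p}$, and comparing this to $x^{-1}e^{-x^2/2}$ produces precisely the $2\ln(L_{np}^{-1})$ threshold, while the polynomial prefactor $x^{p-1}$ accounts for the subtracted $(p-1)\ln\ln L_{np}^{-1}$. Tracking this cancellation, and verifying that the error absorbed into $o(1)$ in the Laplace asymptotics does not exceed the slack supplied by the $-\infty$ divergence, is the delicate piece; once it is settled the two contributions combine to give the stated equivalence.
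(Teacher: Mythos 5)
The paper does not actually prove this statement: it is quoted in the Appendix as Theorem 5.2 of Peligrad et al. (2014b) and attributed to Theorem 1.1 of Frolov (2005), so there is no internal proof to compare against. Your outline follows the same family of ideas as the cited source (two-sided truncation plus a conjugate/Esscher transform, a tilted CLT to produce the Mills ratio, and a separate estimate for the one-big-jump remainder), and your final calibration of the threshold condition $x^{2}-2\ln(L_{np}^{-1})-(p-1)\ln\ln(L_{np}^{-1})\rightarrow-\infty$ against $L_{np}x^{-p}$ versus $x^{-1}e^{-x^{2}/2}$ is the right accounting. So the roadmap is sound, but as a proof it is both internally inconsistent at one point and incomplete at the decisive estimates.

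The concrete problem is the positive truncation level. You set $T_{n}$ of order $\sigma_{n}/x$, yet the big-jump bound you rely on later is $L_{np}x^{-p}$. Markov's inequality and a union bound give $\sum_{j}\mathbb{P}(X_{nj}>T_{n})\leq M_{np}T_{n}^{-p}=L_{np}(\sigma_{n}/T_{n})^{p}$, which for your choice of $T_{n}$ is $L_{np}x^{p}$, and this is \emph{not} $o(1-\Phi(x))$ anywhere near the boundary $x^{2}\approx 2\ln(L_{np}^{-1})$; the computation you then carry out requires $T_{n}\asymp x\sigma_{n}$ (compare the truncation at $\varepsilon x\sigma_{n}$ in Lemma \ref{frolov-trunc}). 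With the correct choice $T_{n}\asymp x\sigma_{n}$, however, the tilted exponent $hT_{n}\asymp x^{2}$ is unbounded, so ``a quadratic expansion of the log-moment generating function with controlled higher cumulants'' is not automatic: one needs the Nagaev/Frolov-type bound in which terms like $(hT_{n})^{k-p}h^{p}M_{np}/k!$ are summed and shown to contribute $o(1)$ to the exponent using $x^{2}\ll\ln(L_{np}^{-1})$ — you gesture at this via $\sum_{j}\mathbb{E}|X_{nj}^{(1)}|^{k}\leq T_{n}^{k-p}M_{np}$ but do not carry it out, and that bound anyway only covers the positive part. Similarly, the negative truncation is not disposed of by saying the truncated variance is a ``harmless perturbation'': removing the part below $-\varepsilon\sigma_{n}/x^{5}$ can shift the mean by as much as $\Lambda_{n}(x^{4},x^{5},\varepsilon)\,\sigma_{n}x/\varepsilon$, i.e. only $o(x\sigma_{n})$, and Chebyshev on the discarded sum yields merely polynomially small error probabilities, so the specific powers $x^{4}$ and $x^{5}$ must be exploited inside a more careful upper/lower-bound scheme with slightly shifted thresholds rather than by outright discarding the lower tail. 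In short: right strategy (and essentially the strategy of Frolov's original proof), but the argument as written would fail at the big-jump step with the stated $T_{n}$, and the genuinely delicate estimates are acknowledged rather than supplied.
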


The following lemma is useful in the proof of Theorem \ref{mix}. It is
Proposition 5.1 in Peligrad et al. (2014b).
\begin{lemma}
\label{frolov-trunc} Assume the conditions in Theorem \ref{frolov} are
satisfied. Fix $\varepsilon>0.$ Define
\[
X_{nj}^{(\varepsilon x\sigma_{n})}=X_{nj}I(X_{nj}\leq\varepsilon x\sigma
_{n})\ \text{ and } \ \ S_{n}^{(\varepsilon x\sigma_{n})}=\sum_{j=1}^{k_{n}}
X_{nj}^{(\varepsilon x\sigma_{n})}.
\]
If $x^{2}\leq c\ln(L_{np}^{-1})$ with $c<1/\varepsilon$, then as $n
\to \infty$ we have
\[
\mathbb{P}\left(  S_{n}^{(\varepsilon x\sigma_{n})}\geq x\sigma_{n}\right)
=(1-\Phi(x))(1+o(1)).
\]
\end{lemma}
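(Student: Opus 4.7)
My plan is to deduce the result from Theorem \ref{frolov} applied to the centered truncated array $\tilde Y_{nj} := X_{nj}^{(y)} - \mathbb{E} X_{nj}^{(y)}$, where $y := \varepsilon x \sigma_n$, and then to translate the resulting asymptotic back to $\mathbb{P}(S_n^{(y)} \ge x\sigma_n)$. The first step is the truncation bookkeeping: because $\mathbb{E} X_{nj}=0$, the mean shift $\mu_n := \sum_j \mathbb{E} X_{nj}^{(y)} = -\sum_j \mathbb{E}[X_{nj}I(X_{nj}>y)]$ is bounded in absolute value by $y^{1-p}M_{np} = \varepsilon^{1-p}x^{1-p}L_{np}\sigma_n$ via Markov's inequality at level $y$, and a parallel computation yields $\sigma_n^2 - \tilde\sigma_n^2 \le y^{2-p}M_{np}$. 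Under the hypothesis $x^2 \le c\ln(L_{np}^{-1})$ and $L_{np}\to 0$, both corrections are negligible, so writing $\tilde S_n := \sum_j \tilde Y_{nj}$ one has $\mathbb{P}(S_n^{(y)} \ge x\sigma_n) = \mathbb{P}(\tilde S_n \ge \tilde x\,\tilde\sigma_n)$ with $\tilde x = x(1+o(1))$ and $\tilde\sigma_n = \sigma_n(1+o(1))$.

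Next I would verify the hypotheses of Theorem \ref{frolov} for the array $(\tilde Y_{nj})$. The left-tail quantity $\Lambda_n$ for $\tilde Y_{nj}$ differs from the one for $X_{nj}$ only through the mean shift and through the event $\{X_{nj}>y\}$, both asymptotically negligible, so the condition $\Lambda_n(\tilde x^4,\tilde x^5,\varepsilon')\to 0$ is inherited from the standing hypotheses of Theorem \ref{frolov}. The crucial new ingredient is the deterministic upper bound $\tilde Y_{nj} \le y + \bigl|\mathbb{E} X_{nj}^{(y)}\bigr| \le 2y$, which gives, for any $q \ge p$,
\[
\tilde M_{nq} := \sum_j \mathbb{E}\bigl[\tilde Y_{nj}^q I(\tilde Y_{nj}\ge 0)\bigr] \le (2y)^{q-p}\cdot 2^{p-1} M_{np}\bigl(1+o(1)\bigr),
\]
so that $\tilde L_{nq} \le C_q(2\varepsilon x)^{q-p} L_{np}$ and $\ln(\tilde L_{nq}^{-1}) \ge \ln(L_{np}^{-1}) - (q-p)\ln(2\varepsilon x) - O(1)$.

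The main obstacle is calibrating $q$ so that Frolov's range condition $\tilde x^2 - 2\ln(\tilde L_{nq}^{-1}) - (q-1)\ln\ln(\tilde L_{nq}^{-1}) \to -\infty$ holds precisely when $c < 1/\varepsilon$. Heuristically, truncation removes the heavy right tail responsible for the failure of the Gaussian regime beyond $c<2$ in the un-truncated case, so the admissible range widens; making this precise requires choosing $q$ large enough to absorb the $\ln\ln$ correction without letting $(q-p)\ln(2\varepsilon x)$ outweigh the gain, and it is in this algebraic balancing that the threshold $c = 1/\varepsilon$ emerges. Once the hypotheses of Theorem \ref{frolov} are in hand, a direct application delivers $\mathbb{P}(\tilde S_n \ge \tilde x\,\tilde\sigma_n) = (1 - \Phi(\tilde x))(1+o(1))$, and substituting $\tilde x = x(1+o(1))$ from the bookkeeping step produces the claimed asymptotic for $\mathbb{P}(S_n^{(y)} \ge x\sigma_n)$.
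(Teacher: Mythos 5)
Your reduction to Theorem \ref{frolov} has a genuine gap, and it sits exactly at the point you label ``the main obstacle'' and then leave as a heuristic: the emergence of the threshold $c<1/\varepsilon$. The bound you have for the truncated, centered array is $\tilde L_{nq}\le C_p(2\varepsilon x)^{q-p}L_{np}$, hence $\ln(\tilde L_{nq}^{-1})\ge \ln(L_{np}^{-1})-(q-p)\ln(2\varepsilon x)-O(1)$. Now test Frolov's range condition at order $q$ in the critical regime $x^{2}\asymp c\ln(L_{np}^{-1})=:c\ell$, so that $\ln(2\varepsilon x)=\tfrac12\ln\ell+O(1)$. The gain from raising the order is $(q-1)\ln\ln(\tilde L_{nq}^{-1})\approx (q-1)\ln\ell$, while the loss is $2(q-p)\ln(2\varepsilon x)\approx (q-p)\ln\ell+O(q)$; these cancel to first order, and what remains is $2\ln(L_{np}^{-1})+(p-1)\ln\ell+O(q)$ against $x^{2}\le c\ell$. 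If $q$ is fixed this forces $c<2$, i.e.\ no improvement over applying the theorem at order $p$; if you let $q=q_n\to\infty$ to harvest the $O(q)$ term you need $q\gtrsim \ell/\ln(1/(2\varepsilon\sqrt c))$, but then $(q-p)\ln(2\varepsilon x)\asymp \ell\,\ln\ell/\ln(1/(2\varepsilon\sqrt c))\gg\ell$, which destroys the lower bound on $\ln(\tilde L_{nq}^{-1})$ you used to compute the gain (and Theorem \ref{frolov} is in any case stated for a fixed moment order, so an $n$-dependent $q$ is not covered by it). So for $\varepsilon<1/2$, where the lemma's strength is actually needed (e.g.\ in the proof of Theorem \ref{MDLD}, which requires $x^{2}\le C\ln(D_{np}^{-1})$ with $C$ arbitrarily large), your scheme cannot reach $c<1/\varepsilon$: the information that every summand is bounded by $\varepsilon x\sigma_n$ is not transmissible through the moment ratios $\tilde L_{nq}$ alone.

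The threshold $1/\varepsilon$ comes from using the truncation inside an exponential-moment (conjugate distribution) estimate, i.e.\ from inside the proof of Frolov's theorem rather than from its statement: with $h\approx x/\sigma_n$ and $y=\varepsilon x\sigma_n$ one has $\mathbb{E}\,e^{hX_{nj}^{(y)}}\le\exp\bigl(\tfrac{h^{2}}{2}\mathbb{E}X_{nj}^{2}+C_ph^{p}\,\mathbb{E}[(X_{nj}^{+})^{p}]\,e^{hy}\bigr)$, and summing over $j$ the correction is of order $x^{p}L_{np}e^{\varepsilon x^{2}}$, which is $o(1)$ precisely when $\varepsilon x^{2}\le (1-\delta)\ln(L_{np}^{-1})$, i.e.\ $x^{2}\le c\ln(L_{np}^{-1})$ with $c<1/\varepsilon$. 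This is how the cited source argues: the present paper does not prove the lemma but quotes it as Proposition 5.1 of Peligrad et al.\ (2014b), whose proof follows Frolov's (2005) method of conjugate distributions for the truncated sum. A secondary, smaller point: your centering bookkeeping needs the criterion $(\tilde x-x)(1+x)\to0$ to transfer the Gaussian asymptotics, which with $|\mu_n|/\sigma_n\le(\varepsilon x)^{1-p}L_{np}$ is fine for $x$ bounded away from $0$ but should be checked in the small-$x$ part of the range; this is repairable, unlike the main step.
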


The following lemma lists some properties of the slowly varying
function. Their proofs can be found in Bingham et al. (1987) or Seneta (1976).

\begin{lemma}
\label{Karamata} A slowly varying function $l(x)$ defined on $[A,\infty)$ has
the following properties: 
\begin{enumerate}
\item For $A<c<C<\infty$, $\lim_{x\rightarrow\infty}\frac{l(tx)}{l(x)}=1$
uniformly in $c\leq t\leq C$.

\item For any $\theta>-1$, $\int_{A}^{x}y^{\theta}l(y)dy\mathbb{\sim}%
\frac{x^{\theta+1}l(x)}{\theta+1}$ as $x\rightarrow\infty$.

\item For any $\theta<-1$, $\int_{x}^{\infty}y^{\theta}l(y)dy\mathbb{\sim
}\frac{x^{\theta+1}l(x)}{-\theta-1}$ as $x\rightarrow\infty$.

\item For any $\eta>0$, $\sup_{t\geq x}(t^{\eta}l(t))\mathbb{\sim}x^{\eta
}l(x)$ as $x\rightarrow\infty$. Moreover, $\sup_{t\geq x}(t^{\eta}%
l(t))=x^{\eta}\bar{l}(x)$, where $\bar{l}(x)$ is slowly varying and $\bar
{l}(x)\mathbb{\sim}l(x).$
\end{enumerate}
\end{lemma}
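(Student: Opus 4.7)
The plan is to derive all four properties from a single structural result, Karamata's representation theorem, which asserts that any measurable slowly varying function $l$ on $[A, \infty)$ can be written in the form $l(x) = c(x) \exp\bigl(\int_A^x \varepsilon(u)/u\, du\bigr)$ for some functions with $c(x) \to c_\infty \in (0, \infty)$ and $\varepsilon(u) \to 0$ as $u \to \infty$. Establishing this representation is the bulk of the work and itself rests on the Uniform Convergence Theorem (Property 1). Given the representation, each of the four assertions reduces to direct computation.

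For Property 1 I would work with $h(x) := \log l(e^x)$, so the hypothesis $l(tx)/l(x) \to 1$ becomes the additive statement $h(x+u) - h(x) \to 0$ for each fixed $u$. By measurability of $h$ and a Steinhaus-type argument on positive-measure difference sets (the classical Bingham--Goldie--Teugels proof), pointwise convergence is promoted to uniformity in $u$ on any compact interval. Transforming back yields uniformity of $l(tx)/l(x) \to 1$ in $t \in [c, C]$.

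For Properties 2 and 3 I would substitute $y = xs$ to obtain
\[
\int_A^x y^\theta l(y)\, dy = x^{\theta+1} l(x) \int_{A/x}^1 s^\theta \frac{l(xs)}{l(x)}\, ds,
\]
and analogously for the tail integral in the case $\theta < -1$. On any compact subinterval of $(0, 1]$ (respectively $[1, \infty)$) Property 1 gives $l(xs)/l(x) \to 1$ uniformly. The endpoint where $s$ approaches $0$ (for $\theta > -1$) or $\infty$ (for $\theta < -1$) is controlled using Potter's bounds $l(xs)/l(x) \le K \max(s^\delta, s^{-\delta})$, valid for $x$ large and any prescribed $\delta > 0$. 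Choosing $\delta$ so that the total exponent of $s$ remains integrable justifies dominated convergence and leaves the elementary integral $\int_0^1 s^\theta ds = 1/(\theta+1)$, proving the asymptotic.

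For Property 4 I would set
\[
\bar l(x) := x^{-\eta} \sup_{t \ge x} \bigl( t^\eta l(t) \bigr),
\]
so that $x^\eta \bar l(x)$ is nonincreasing by construction. Since $\eta > 0$ and $l$ is slowly varying, Potter's bounds yield $t^\eta l(t) \to \infty$, so for each $x$ large the supremum is attained (up to $1+o(1)$) in some window $[x, M_x x]$ with $M_x$ bounded; Property 1 applied on $[1, M]$ gives $\bar l(x)/l(x) \to 1$, which in turn forces $\bar l$ itself to be slowly varying. The main obstacle throughout is Property 1: every subsequent argument relies on the uniformity, and a fully rigorous proof requires either Steinhaus's lemma or a Baire-category argument, both of which are delicate. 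This is precisely why the lemma is typically cited from Bingham, Goldie and Teugels (1987) or Seneta (1976) rather than reproved in an applied paper such as this one.
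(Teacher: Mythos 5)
The paper gives no proof of this lemma at all; it simply cites Bingham, Goldie and Teugels (1987) and Seneta (1976), so your sketch has to be judged on its own. For properties 1--3 it essentially holds up: the Steinhaus/Baire route to the Uniform Convergence Theorem and the substitution $y=xs$ combined with Potter's bounds and dominated convergence is exactly the classical argument in the cited sources. (You should add the standard splitting $\int_A^{X_0}+\int_{X_0}^x$, since Potter's bounds only apply when both arguments are large and $l$ is merely locally integrable near $A$; the discarded piece is $O(1)$, which is negligible because $x^{\theta+1}l(x)\to\infty$ for $\theta>-1$, and a symmetric remark applies to the tail integral in property 3.)

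Property 4 is where there is a genuine error. As stated (and as you took it), the claim concerns $\sup_{t\geq x}\bigl(t^{\eta}l(t)\bigr)$ with $\eta>0$. Since $t^{\eta}l(t)$ is regularly varying of positive index, it tends to infinity, so that supremum is $+\infty$ for every $x$; your own observation that $t^{\eta}l(t)\to\infty$ therefore contradicts, rather than supports, the assertion that the supremum is attained up to $1+o(1)$ in a bounded window $[x,M_xx]$ --- divergence pushes the supremum out to $t=\infty$, it does not localize it. The statement actually needed (BGT, Theorem 1.5.3) is $\sup_{t\geq x}\bigl(t^{-\eta}l(t)\bigr)\sim x^{-\eta}l(x)$, equivalently $\sup_{A\leq t\leq x}\bigl(t^{\eta}l(t)\bigr)\sim x^{\eta}l(x)$; the displayed version has a sign slip that your argument should have exposed rather than absorbed. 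For the corrected statement your window idea is the right skeleton, but the localization must come from Potter's bound, not from divergence: taking $\delta=\eta/2$, for $t\geq\Lambda x$ one gets $t^{-\eta}l(t)\leq K(t/x)^{-\eta/2}x^{-\eta}l(x)\leq K\Lambda^{-\eta/2}x^{-\eta}l(x)$, which is below $x^{-\eta}l(x)$ once $\Lambda$ is large, while on $x\leq t\leq\Lambda x$ the Uniform Convergence Theorem gives $t^{-\eta}l(t)\leq(1+\varepsilon)x^{-\eta}l(x)$, and the value at $t=x$ provides the matching lower bound. The ``moreover'' part then follows by setting $\bar l(x)=x^{\eta}\sup_{t\geq x}\bigl(t^{-\eta}l(t)\bigr)$, which is $\sim l(x)$ and hence slowly varying. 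As written, your argument for property 4 proves nothing.
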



\begin{thebibliography}{99}
\bibitem{BDP}
Banys, P., Davydov, Y. and Paulauskas, V., 2010.  Remarks on the SLLN for linear random fields. 
{\it Statist. Probab. Lett.} {\bf 80}, 489-496.

\bibitem {BinghamGoldieTeugels}
Bingham, N. H., Goldie, C. M.  and Teugels,  J. L., 1987. \textit{Regular Variation}. Cambridge University Press,
Cambridge, UK.



\bibitem{CW}
Chen, P. and Wang, D., 2008. Convergence rates for probabilities of moderate
deviations for moving average processes.  {\it Acta Math. Sinica, English Series}
{\bf 24}, 611-622.

\bibitem{Davis}
Davis, J. A., 1968. Convergence rates for the law of the iterated logarithm.  {\it Ann.
Math. Statist.} {\bf 39}, 1479-1485.

\bibitem{DavisHsing}
Davis, R. A. and Hsing, T. 1995.
Point process and partial sum convergence for weakly dependent random variables with infinite variance.
{\it Ann. Probab.} {\bf 23},  879-917.

\bibitem{delaPenaDine}
de la Pe\~na, V. and Gin\'e, E.,  1999. {\it Decoupling. From Dependence to Independence}. 
Springer, New York.

\bibitem{DG01}
Djellout, H. and Guillin, A., 2001.  Large and moderate deviations for moving average processes.
{\it Ann. Fac. Sci. Toulouse Math. } (6) {\bf 10},  23-31.

\bibitem{DGW06}
Djellout, H., Guillin, A. and Wu, L., 2006. Moderate deviations of empirical periodogram
and non-linear functionals of moving average processes. {\it Ann. Inst. H. Poincar\'e Probab. Statist.}
{\bf 42},  393-416.

\bibitem{ElM07}
 El Machkouri, M.,  2007.  Nonparametric regression estimation for random fields in a fixed-design.
 {\it Stat. Inference Stoch. Process.} {\bf 10},   29--47.

\bibitem{ElM14}
El Machkouri, M.,  2014. Kernel density estimation for stationary random fields. {\it
ALEA Lat. Am. J. Probab. Math. Stat.} {\bf 11}, 259-279.


\bibitem{ElMS10}
El Machkouri, M. and Stoica, R., 2010.  Asymptotic normality of kernel estimates in a regression
model for random fields. {\it J. Nonparametr. Stat.} {\bf 22}, 955-971.

\bibitem {frolov}
Frolov, A. N., 2005. On probabilities of moderate deviations
of sums for independent random variables. \textit{J. Math. Sci.} \textbf{127}, 1787-1796.

\bibitem {GhoshS09}
Ghosh, S. and Samorodnitsky, G., 2009. The effect of memory on functional large
deviations of infinite moving average processes. {\it Stoch. Process. Appl.} {\bf 119}, 534-561

\bibitem{GuTran}
Gu, W. and  Tran,  L. T.,  2009. Fixed design regression for negatively associated random fields.
\textit{J. Nonparametric Statist.} \textbf{21}, 345-363.

\bibitem{Gut}
Gut, A., 1980. Convergence rates for probabilities of moderate deviations for sums of
random variables with multidimensional indices. {\it Ann. Probab.} {\bf 8}, 298-313.


\bibitem{HLT04}
Hallin, M., Lu, Z.  and Tran, L. T.,  2004a.   Local linear spatial regression. {\it Ann. Statist.}
{\bf 32}, 2469-2500.

\bibitem{HLT}
Hallin, M., Lu, Z.  and Tran, L. T.,  2004b. Kernel density estimation for spatial processes:
the $L_1$ theory. \textit{J. Multivariate Anal.}   \textbf{88}, 61-75.

\bibitem{JRW95}
Jiang, T., Rao, M., and Wang, X., 1995. Large deviations for moving average processes,.
{\it  Stoch. Process. Appl. } {\bf  59}, 309--320.


\bibitem{Klesov}
Klesov, O., 2014. \textit{Limit Theorems
for Multi-Indexed Sums of Random Variables}.
Probability Theory and Stochastic Modelling \textbf{71}, Springer, Heidelberg New York Dordrecht London.

\bibitem{Li}
Li, D., 1991. Convergence rates of law of iterated logarithm for B-valued random
variables. {\it Sci. China Ser. A} {\bf 34}, 395-404.

\bibitem{LR}
Li, D. and Rosakly, A.,  2007. A supplement to the Davis-Gut law. {\it J. Math. Anal. Appl.}
{\bf 330}, 1488-1493.

\bibitem{LLX}
Li, L., J. Liu and Xiao, Y., 2009. Wavelet regression with long memory infinite
moving average errors. \textit{J. Appl. Probab. Stat.}   \textbf{2}, 183--211.


\bibitem{MallikWoodroofe}
Mallik, A. and Woodroofe, M.,  2011. A central limit theorem for linear random fields. \
 \textit{Statist. Probab. Lett.} \textbf{81} 1623-1626.

\bibitem{MarinucciPoghosyan}
Marinucci, D. and Poghosyan, S.,  2001. Asymptotics for linear random fields.
\textit{Statist. Probab. Lett.} \textbf{51}  131-141.

\bibitem{MS00}
Mikosch, T. and Samorodnitsky, G., 2000.
The supremum of a negative drift random walk with dependent heavy-tailed steps.  
{\it Ann. Appl. Probab.} {\bf 10}, 1025-1064. 

\bibitem{MW13}
Mikosch, T. and  Wintenberger, O., 2013. 
Precise large deviations for dependent regularly varying sequences.  
{\it Probab. Th. Rel. Fields} {\bf 156},   851-887. 

\bibitem {AVNagaev1}
Nagaev, A. V.,  1969a. Limit theorems for large deviations
where Cram\'{e}r's conditions are violated (in Russian).
\textit{Izv. Akad. Nauk UzSSR Ser. Fiz.-Mat. Nauk} \textbf{6},
17-22.

\bibitem {AVNagaev2}
Nagaev, A. V.,  1969b. Integral limit theorems for large deviations
when Cram\'{e}r's condition is not fulfilled I, II.
\textit{Theory Probab. Appl.} \textbf{14},
51-64, 193-208.

\bibitem {Nagaev2} 
Nagaev, S. V.,  1979. Large deviations of sums of
independent random variables. \textit{Ann. Probab.} \textbf{7}, 745-789.

\bibitem{Paul}
Paulauskas, M.,  2010. On Beveridge-Nelson decompositions and limit
theorems for linear random fields.
{\it J. Multivariate Anal.} {\bf 101}, 3621-3639.

\bibitem{PSZW}
Peligrad, M., Sang, H.,  Zhong, Y.  and Wu, W. B., 2014a. Exact moderate and large
deviations for linear processes.  \textit{Statist. Sinica} \textbf{24}, 957-969.

\bibitem{PSZW1}
Peligrad, M., Sang, H.,  Zhong, Y.  and Wu, W. B., 2014b.  Supplementary
material for the paper \textquotedblleft Exact moderate and large deviations
for linear processes". \textit{Statist. Sinica}, 15 pp, available online at
{\it http://www3.stat.sinica.edu.tw/statistica/}

\bibitem {SaulisS00}
 Saulis, L. and Statulevi\u cius, V., 2000. Limit theorems on large deviations. In:
{\it Limit Theorems of Probability Theory}. (Prokhorov, Yu. V. and Statulevi\u cius, V.,
editors), pp. 185--266, Springer, New York.

\bibitem{Seneta}
Seneta, E., 1976. \textit{Regularly Varying Functions}.
Lecture Notes in Mathematics \textbf{508}, Springer, Berlin.

\bibitem{Surgailis}
Surgailis, D.,  1982.  Zones of attraction of self-similar multiple integrals. 
 \textit{Lith Math J}  \textbf{22} 185-201. 

\bibitem{Tran90}
Tran, L. T.,  1990.  Kernel density estimation on random fields.
{\it J. Multivariate Anal.} {\bf 34}, 37-53.


\bibitem{WW14}
Wang, Y. and  Woodroofe, M.,  2014. On the asymptotic normality of kernel density
estimators for causal linear random fields. {\it J. Multivariate Anal.} {\bf 123}, 201-213.

\bibitem {WuMin05}
Wu, W. B. and Min, W.,  2005. On linear processes with dependent innovations. 
\textit{Stochastic Process. Appl.} \textbf{115}, 939-958.

\bibitem {WuZhao}
Wu, W. B. and Zhao, Z., 2008. Moderate deviations for
stationary processes. \textit{Statist. Sinica} \textbf{18}, 769-782.


\end{thebibliography}
\end{document}